\numberwithin{equation}{section}
\newtheorem{theorem}{Theorem}[section]
\newtheorem{proposition}[theorem]{Proposition}
\newtheorem{lemma}[theorem]{Lemma}
\newtheorem{corollary}[theorem]{Corollary}
\theoremstyle{definition}
\newtheorem{definition}[theorem]{Definition}
\newtheorem{remark}[theorem]{Remark}
\newtheorem{example}[theorem]{Example}
\newcommand\Ascr{\mathscr{A}}
\newcommand\Cscr{\mathscr{C}}
\newcommand\Iscr{\mathscr{I}}
\newcommand\Oscr{\mathscr{O}}
\newcommand\Rscr{\mathscr{R}}
\newcommand\B{\mathbb{B}}
\newcommand\C{\mathbb{C}}
\newcommand\CP{\mathbb{CP}}
\newcommand\N{\mathbb{N}}
\newcommand\R{\mathbb{R}}
\newcommand\T{\mathbb{T}}
\newcommand\Z{\mathbb{Z}}
\newcommand\cd{\overline{\mathbb D}}
\newcommand\igot{\mathfrak{i}}
\renewcommand\igot{\mathfrak{i}}
\renewcommand\imath{\igot}
\newcommand\hra{\hookrightarrow}
\newcommand\lra{\longrightarrow}
\newcommand\wt{\widetilde}
\newcommand\di{\partial}
\newcommand\dist{\mathrm{dist}}
\newcommand\Res{\mathrm{Res}}
\newcommand\Id{\mathrm{Id}}
\newcommand\Oscrc{\overline{\mathscr{O}}}
\def\dist{\mathrm{dist}}
\begin{document}

\title{Immersions of open Riemann surfaces into \\ the Riemann sphere}

\author{Franc Forstneri\v c}


\subjclass[2010]{Primary 32H02, 58D10.  Secondary 57R42}

\date{16 February 2020}

\keywords{Riemann surface, holomorphic immersion, meromorphic function, h-principle, weak homotopy equivalence}

\begin{abstract}   
In this paper we show that the space of holomorphic immersions from any given open 
Riemann surface, $M$, into the Riemann sphere $\CP^1$ is weakly homotopy equivalent 
to the space of continuous maps from $M$ to the complement  of the zero section 
in the tangent bundle of $\CP^1$.
It follows in particular that this space has $2^k$ path components, where 
$k$ is the number of generators of the first homology group $H_1(M,\Z)=\Z^k$.
We also prove a parametric version of the Mergelyan approximation theorem for maps
from Riemann surfaces into an arbitrary complex manifold, 
a result used in the proof of our main theorem.
\end{abstract}

\maketitle

%
%
%
%
\section{The main result}\label{sec:mainresult} 

In this paper, $M$ always stands for an open Riemann surface. 
Our aim is to determine the weak homotopy type of the space $\Iscr(M,\CP^1)$ of holomorphic 
immersions $M\to\CP^1$ into the Riemann sphere $\CP^1=\C\cup\{\infty\}$.

We begin by identifying the space of {\em formal immersions} of $M$ to $\CP^1$.
Let $E=T\CP^1\setminus\{0\}\stackrel{\pi}{\lra}\CP^1$ denote the tangent bundle of $\CP^1$ with 
the zero section removed, a holomorphic $\C^*$-bundle over $\CP^1$. 
Here, $\C^*=\C\setminus\{0\}$. Choose a nowhere vanishing holomorphic vector field $V$ on $M$. 
(Recall that every holomorphic vector bundle over an open Riemann surface 
is holomorphically trivial  \cite[Theorem 5.3.1]{Forstneric2017E}.
A choice of $V$ corresponds to a trivialisation of the tangent bundle 
of $M$.) A holomorphic immersion $f:M\to \CP^1$ lifts to a holomorphic map 
$\tilde f:M\to E$ with $\pi\circ \tilde f=f$, defined by 
\begin{equation}\label{eq:lifting}
	\tilde f(x)=df_x(V_x)\in T_{f(x)}\CP^1\setminus \{0\} = E_{f(x)},\quad  x\in M.
\end{equation}
Let $\Phi$ denote the map  
\begin{equation}\label{eq:Phi}
	\Iscr(M,\CP^1) \stackrel{\Phi}{\longrightarrow} \Oscr(M,E)\subset \Cscr(M,E)
\end{equation}
sending $f\in \Iscr(M,\CP^1)$ to $\Phi(f)=\tilde f\in \Oscr(M,E)\subset \Cscr(M,E)$. 
We call $\Cscr(M,E)$ the space of {\em formal immersions} of $M$ into $\CP^1$.
These mapping spaces carry the compact-open topology.

Our main result is the following; however, see also the more precise version
given by Theorem \ref{th:PHP}. 

%
%
\begin{theorem}\label{th:WHE}
For every open Riemann surface, $M$, the map $\Phi$ \eqref{eq:Phi} 
from the space of holomorphic immersions $M\to\CP^1$ to the space of
formal immersions satisfies the parametric h-principle, and hence is a weak homotopy equivalence.
\end{theorem}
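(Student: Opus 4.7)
The plan is to prove the parametric h-principle (PHP) for $\Phi$ directly; weak homotopy equivalence then follows by standard arguments. I would factor the argument through the intermediate space of holomorphic formal immersions. Note that $E = T\CP^1 \setminus \{0\}$ is a holomorphic $\C^*$-bundle over $\CP^1$ and is easily seen to be an Oka manifold (for instance, $E$ can be identified with the free quotient $(\C^2 \setminus \{0\})/\{\pm 1\}$, and $\C^2 \setminus \{0\}$ is elliptic, hence Oka). Since $M$ is Stein, the Oka principle yields that the inclusion $\Oscr(M, E) \hookrightarrow \Cscr(M, E)$ is a weak homotopy equivalence, so it suffices to prove the PHP for $\Phi: \Iscr(M, \CP^1) \to \Oscr(M, E)$.

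For this, I would exhaust $M$ by a nested family of smoothly bounded, Runge compact sets $K_1 \subset K_2 \subset \cdots$ with $M = \bigcup_j K_j$, chosen so that each transition $K_j \subset K_{j+1}$ is either a Runge thickening (no new topology) or the attachment of a single 1-handle (adding one generator of $H_1$). Given a compact parameter space $Q$ and a continuous family $\{\phi_q\}_{q \in Q}$ of holomorphic formal immersions, I would inductively construct a family of immersions $f_q^{(j)}: K_j \to \CP^1$ together with a homotopy of $\phi_q$ (through holomorphic formal immersions on a neighborhood of $K_j$) so that $\Phi(f_q^{(j)})$ approximates the deformed $\phi_q$ on $K_j$, with enough control on the approximation errors that $\{f_q^{(j)}\}$ converges locally uniformly to a global holomorphic immersion $f_q: M \to \CP^1$. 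Each inductive step uses the parametric Mergelyan theorem proved in the paper as the underlying approximation mechanism.

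The hard part, and the technical core, is the handle attachment step. Writing $\phi_q = (g_q, v_q)$, on each handle the equation $df(V) \approx v_q$ amounts to integrating the formal derivative $v_q$ against the 1-form $V^*$ dual to $V$, which introduces a period obstruction along the newly created 1-cycle: the period integral must vanish (in the appropriate chart on $\CP^1$) for a single-valued holomorphic extension to exist. I would dispose of this by exploiting the flexibility in the vertical $\C^*$-direction of $E$: multiplying $v_q$ by a suitable holomorphic, nowhere-vanishing function on a neighborhood of the handle alters the period by a prescribed amount without leaving $\Oscr(M, E)$, and this can be used to annihilate the obstruction. Carrying this period-killing out parametrically in $q \in Q$ (using parametric Mergelyan to match on overlaps and to propagate the adjustment globally), and controlling the successive approximation errors by a summable sequence, yields the PHP for $\Phi$ and completes the proof.
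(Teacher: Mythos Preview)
Your overall scheme (Oka reduction to $\Oscr(M,E)$, exhaustion, induction over noncritical and critical steps) matches the paper's, but the proposal has a genuine gap at its technical core and misidentifies where the main difficulty lies.

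First, the noncritical extension: you assert that parametric Mergelyan is the ``underlying approximation mechanism,'' but Mergelyan only yields a holomorphic \emph{map} $K_{j+1}\to\CP^1$ close to $f^{(j)}_q$ on $K_j$, not an \emph{immersion}; the approximant may acquire critical points or higher-order poles on $K_{j+1}\setminus K_j$. The paper's real technical core is Proposition~\ref{prop:approximation}, which extends a $P$-family of immersions from a small disc to a large one while preserving the immersion condition. An immersion into $\CP^1$ is a meromorphic function with simple poles, and its derivative has double poles with vanishing residue; the proposition tracks the pole locations parametrically and enforces the residue condition by a linear interpolation scheme on the logarithmic derivative. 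This disc-to-disc result, combined with Cartan-pair splitting, drives the noncritical step---Mergelyan alone does not.

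Second, your handle-attachment mechanism is the $\C$-target argument transplanted to $\CP^1$, and it breaks. Writing $\phi_q=(g_q,v_q)$ and ``integrating $v_q$'' is already problematic: $v_q$ is a section of $g_q^*T\CP^1$, not a function, so integration is not globally meaningful once $g_q$ meets both affine charts. Even locally, the derivative of an immersion into $\CP^1$ is meromorphic with exactly double poles and zero residue, and multiplying such a function by a nowhere-vanishing holomorphic $\lambda$ \emph{destroys} the zero-residue condition (the new residue at a pole $a$ is $\lambda'(a)$ times the leading Laurent coefficient). So your period-killing move in the vertical $\C^*$-direction undoes integrability rather than achieving it. The paper sidesteps this entirely: at the critical step it extends the \emph{immersion} $f^{(j)}_q$ itself across the arc $\Lambda$ as a smooth immersion via the Smale--Hirsch--Gromov h-principle (no period obstruction arises, since one is extending a map rather than integrating a form), then applies parametric Mergelyan in $\Cscr^1$ on the admissible set $D_j\cup\Lambda$ to make the extension holomorphic, and reduces to the noncritical case already handled by Proposition~\ref{prop:approximation}.
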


Being weak homotopy equivalence means that $\Phi$ induces a bijection 
\[
	\pi_0(\Iscr(M,\CP^1))\lra \pi_0(\Cscr(M,E))=[M,E]
\]
of path components of the two spaces and, for each $k\in\N=\{1,2,3,\ldots\}$ and any base point 
$f_0\in \Iscr(M,\CP^1)$, an isomorphism 
\[
	\pi_k(\Phi): \pi_k(\Iscr(M,\CP^1),f_0) \stackrel{\cong}{\lra} \pi_k(\Cscr(M,E),\Phi(f_0))
\] 
of the corresponding fundamental groups. Here, $[M,E]$ denotes the set of homotopy
classes of continuous maps $M\to E$.

Since $E$ is a fibre bundle with Oka fibre $\C^*$ over an Oka base $\CP^1$, $E$ is an
Oka manifold (cf.\ \cite[Theorem 5.6.5]{Forstneric2017E}), and hence the natural inclusion 
$\Oscr(M,E)\hookrightarrow \Cscr(M,E)$ is a weak homotopy equivalence
by the Oka principle. Thus, we may consider $\Phi$ either as a map to $\Oscr(M,E)$, or to $\Cscr(M,E)$.

Let us now identify the path components of the spaces $\Iscr(M,\CP^1)$ and $\Cscr(M,E)$.
Denote by $\Z$ the ring of integers. The fundamental group of $E$ equals $\pi_1(E)\cong\mathbb Z_2=\Z/2\Z$
and is generated by any simple loop in a fibre $E_x\cong\C^*$ of $E$ (see Lemma \ref{lem:pi1E}). 
The first homology group of $M$ equals $H_1(M,\Z)=\Z^k$ for some 
$k\in\Z_+\cup\{\infty\}=\{0,1,2,\ldots,\infty\}$, 
and $M$ is homotopy equivalent to a bouquet of $k$ circles. It follows that the space 
$\Cscr(M, E)$ has $2^k$ path components, each determined by the winding numbers 
modulo two of a map on a collection of loops forming a basis of $H_1(M,\Z)$ (see Corollary \ref{cor:mapstoE}). 
Together with Theorem \ref{th:WHE} we obtain the following result.

%
%
\begin{corollary}\label{cor:pathcomponents}
For any open Riemann surface, $M$, the space of holomorphic immersions
$M\to \CP^1$ has $2^k$ path components where $H_1(M,\Z)=\Z^k$. 
A path component is determined by the winding numbers modulo two of the derivative of 
an immersion $M\to\CP^1$ on a basis of the homology group $H_1(M,\Z)$. 
\end{corollary}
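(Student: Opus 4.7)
The plan is to combine Theorem \ref{th:WHE} with a direct computation of $[M,E]$ and then interpret the resulting invariants as mod $2$ winding numbers of the derivative. The first step is immediate: since a weak homotopy equivalence induces a bijection on $\pi_0$, Theorem \ref{th:WHE} yields a bijection $\pi_0(\Iscr(M,\CP^1)) \stackrel{\cong}{\lra} \pi_0(\Cscr(M,E)) = [M,E]$, reducing the corollary to a topological calculation.

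For the cardinality I would invoke the standard fact that every open Riemann surface has the homotopy type of a one-dimensional CW complex, and specifically of a bouquet $\bigvee_{i=1}^k S^1$ of $k$ circles when $H_1(M,\Z) = \Z^k$, where the wedge factors can be chosen to correspond to a basis of $H_1(M,\Z)$. Since $E$ is path-connected and $\pi_1(E) \cong \Z/2\Z$ is abelian by Lemma \ref{lem:pi1E}, free homotopy classes of loops in $E$ coincide with based homotopy classes, and a map out of a wedge of circles is determined up to free homotopy by its restriction to each wedge factor. Hence
\[
[M,E] \;\cong\; \pi_1(E)^k \;=\; (\Z/2\Z)^k,
\]
a set of cardinality $2^k$ (interpreted as $2^{\aleph_0}$ when $k=\infty$), which gives the count of path components of $\Iscr(M,\CP^1)$.

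It remains to identify the invariant geometrically. For $f \in \Iscr(M,\CP^1)$ with lift $\tilde f = \Phi(f)$ given by \eqref{eq:lifting}, and a loop $\gamma$ in $M$ representing a basis class of $H_1(M,\Z)$, the homotopy class $[\tilde f \circ \gamma] \in \pi_1(E) = \Z/2\Z$ is by definition the winding number modulo two of the derivative $df$ along $\gamma$ (measured against the trivialisation of $TM$ provided by $V$ and the structure of $E$ as a $\C^*$-bundle over $\CP^1$). The resulting $k$-tuple therefore labels the path component of $f$, as claimed. No serious obstacle arises: once Theorem \ref{th:WHE} and Lemma \ref{lem:pi1E} are in hand, the argument is a formal consequence of the enumeration of homotopy classes of maps from a wedge of circles into a space with abelian fundamental group.
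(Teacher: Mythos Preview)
Your proposal is correct and follows essentially the same route as the paper: the paper derives the corollary from Theorem \ref{th:WHE} together with the computation $\pi_0(\Cscr(M,E))=\Z_2^k$ recorded as Corollary \ref{cor:mapstoE}, whose proof is exactly your argument using Lemma \ref{lem:pi1E} and the homotopy equivalence of $M$ with a bouquet of $k$ circles. The only cosmetic difference is that the paper phrases the result cohomologically as $[M,E]=H^1(M,\Z_2)$ rather than as $\pi_1(E)^k$.
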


More precise information is obtained from the commutative diagram 
\[ 
	\xymatrix@C+=1.2cm{
	& \Iscr(M,\C) \,\,\ar@{^{(}->}[r]^{\iota} \ar[d]_{\Phi} &  \Iscr(M,\CP^1)\ar[d]^{\Phi}  \\
	\Cscr(M, S^1)  \ar[r]^{\simeq} & \Cscr(M,E|_\C) \ar@{^{(}->}[r]^{\iota}  & \Cscr(M,E) 
	}    
\]
induced by the inclusion $\C\hra\CP^1$. Note that $E|_\C\cong \C\times \C^*\simeq S^1$, the inclusion 
of the circle $S^1$ into $\C^*$ being a homotopy equivalence.
Hence, the spaces $\Cscr(M,E|_\C)$ and $\Cscr(M, S^1)$ are homotopy equivalent, so we 
may take $\Cscr(M, S^1)$ as the space of formal immersions  $M\to \C$.

The first construction of holomorphic immersions from an arbitrary open Riemann
surface, $M$, into $\C$ was given by R.\ C.\ Gunning and R.\ Narasimhan in 1967,
\cite{GunningNarasimhan1967}. Much more recently, 
it was proved by F.\ Forstneri\v c and F.\ L\'arusson in 2018 
(see \cite[Theorem 1.5]{ForstnericLarusson2019CAG})
that for any such $M$, holomorphic immersions  $M\to\C^n$ for any $n\ge 1$ 
satisfy the parametric h-principle. More precisely, given a nowhere vanishing holomorphic
vector field $V$ on $M$, the map $\Phi:\Iscr(M,\C^n)\to \Cscr(M, S^{2n-1})$ defined by
\[
	\Phi(f)(x)= \frac{df_x(V_x)}{\|df_x(V_x)\|} \in S^{2n-1} \subset \C^n,\quad x\in M,
\]
satisfies  the parametric h-principle, and hence is a weak homotopy equivalence.
(Here, $S^{2n-1}$ denotes the unit sphere of $\C^n=\R^{2n}$.)
It is also a genuine homotopy  equivalence if $M$ is of finite topological type; 
see \cite[Remark 6.3]{ForstnericLarusson2019CAG}. For $n=1$, this result
shows that the vertical map in the left column of the above diagram is a weak homotopy equivalence,
and is a homotopy equivalence if $M$ is of finite topological type.
By Theorem \ref{th:WHE}, the vertical map in the right column is also
a weak homotopy equivalence. Hence, Theorem \ref{th:WHE} and 
Corollary \ref{cor:mapstoE} imply the following.


%
%
\begin{corollary}\label{cor:CandP1}
The natural inclusion $\Iscr(M,\C) \hra  \Iscr(M,\CP^1)$ induces a surjective map
$\pi_0(\Iscr(M,\C)) \to \pi_0(\Iscr(M,\CP^1))$ of the respective spaces of path components.
This map is determined by sending the winding numbers of the derivative of an immersion
$f\in \Iscr(M,\C)$ on a basis of the homology group $H_1(M,\Z)$ to their reductions modulo $2$.
In particular, every holomorphic immersion $M\to\CP^1$ can be deformed through a path of 
holomorphic immersions to a holomorphic immersion $M\to\C$.
\end{corollary}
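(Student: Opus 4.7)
The plan is to chase the commutative diagram displayed immediately before the corollary on the level of $\pi_0$, reducing everything to an elementary computation about $\pi_1(E)$. By Theorem \ref{th:WHE} the right-hand vertical map $\Phi\colon\Iscr(M,\CP^1)\to\Cscr(M,E)$ is a weak homotopy equivalence; by the Forstneri\v c--L\'arusson result cited in the paragraph above, so is the left-hand vertical map $\Phi\colon\Iscr(M,\C)\to\Cscr(M,S^1)$. The horizontal arrow $\Cscr(M,S^1)\to\Cscr(M,E|_\C)$ is a homotopy equivalence since $E|_\C\cong\C\times\C^*\simeq S^1$. Therefore, after applying $\pi_0$, the inclusion-induced map $\iota_*\colon\pi_0(\Iscr(M,\C))\to\pi_0(\Iscr(M,\CP^1))$ is naturally identified with the map $j_*\colon [M,S^1]\to [M,E]$ induced by the composition $j\colon S^1\simeq E|_\C\hookrightarrow E$.

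I would then identify these two hom-sets explicitly. Since $M$ is homotopy equivalent to a bouquet of $k$ circles, $[M,S^1]\cong H^1(M,\Z)\cong\Z^k$, the isomorphism recording the vector of winding numbers on a fixed basis of $H_1(M,\Z)$. On the $\CP^1$ side, the already-cited Corollary \ref{cor:mapstoE} yields $[M,E]\cong(\Z/2\Z)^k$ via the mod $2$ winding numbers, the input being that $\pi_1(E)\cong\Z/2\Z$ is generated by a simple fibre loop (Lemma \ref{lem:pi1E}). Since $j$ is, up to homotopy, the inclusion of such a fibre circle, $j_*$ is precisely the coordinatewise reduction $\Z^k\to(\Z/2\Z)^k$.

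All three claims of the corollary then fall out. The reduction modulo $2$ is surjective, hence so is $\iota_*$. Its explicit expression in terms of winding numbers of the derivative is exactly what $j_*$ computes, once one unwinds the vertical identifications supplied by $\Phi$ (using the definition \eqref{eq:lifting} of the lift $\tilde f$, which sends an immersion to its derivative along $V$). Finally, the last sentence is merely the translation of surjectivity on $\pi_0$ into the deformation statement that every holomorphic immersion $M\to\CP^1$ lies in the path component of some holomorphic immersion $M\to\C$.

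I do not expect any real obstacle, since the heavy lifting has already been done by Theorem \ref{th:WHE} and the Forstneri\v c--L\'arusson theorem. The only bookkeeping point is to verify that, under the weak equivalence $\Phi\colon\Iscr(M,\C)\to\Cscr(M,S^1)$, the path component of $f$ corresponds to the winding numbers of $df(V)/\|df(V)\|$ on the chosen basis, and analogously on the $\CP^1$ side; this is immediate from \eqref{eq:lifting} together with the naturality of the diagram.
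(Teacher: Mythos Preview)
Your proposal is correct and follows precisely the route the paper indicates: the corollary is stated as a direct consequence of Theorem~\ref{th:WHE}, the Forstneri\v c--L\'arusson weak homotopy equivalence for $\Iscr(M,\C)$, and Corollary~\ref{cor:mapstoE}, obtained by applying $\pi_0$ to the displayed commutative square. You have simply spelled out the diagram chase that the paper leaves implicit.
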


\begin{example}
Let $M$ be a domain in $\C$ with the coordinate $z$. 
Fix the standard trivialisation of $T\C\cong \C\times \C$ given by the vector field $\di/\di z$.
The fibre component of the map $\Phi$ \eqref{eq:Phi} then takes an immersion 
$f: M\to\C$ to its complex derivative $f':M\to\C^*$. 

Consider the simplest nontrivial case when $M$ is an annulus in $\C$, and 
assume for simplicity that $M$ contains the unit circle $\T=\{|z|=1\}$.
The path components of $\Iscr(M,\C)$ are then represented by the immersions 
$z\mapsto z^d$ for $d\in\Z\setminus \{0\}$, and by the {\em figure eight immersion} for $d=0$. 
Indeed, the derivative $(z^d)'=dz^{d-1}$ has winding number $d-1\ne -1$ on $\T$,
which covers all integers except $-1$.
Let $f:\T\to \C$ be a real analytic figure eight immersion whose tangent vector map
$e^{\imath t} \mapsto \frac{d}{dt} f(e^{\imath t}) = \imath f'(e^{\imath t}) e^{\imath t}$  
has winding number zero. Then, $f$ complexifies to a holomorphic immersion of a surrounding annulus, 
and the winding number of $z\mapsto f'(z)$ along the circle $z=e^{\imath t}$ equals $-1$. 
Immersions $M\to \C$ with winding numbers $d_1,d_2$ are isotopic as 
immersions into $\CP^1$ if and only if $d_1-d_2$ is even, and the two path components of the space
$\Iscr(M,\CP^1)$ are represented by any pair immersions $M\to\C$ with $d_1-d_2$ odd. 
\qed \end{example}

%
%
We wish to place Theorem \ref{th:WHE} in the context of known results.

We have already mentioned that immersions from open Riemann surfaces into $\C^n$
satisfy the parametric h-principle (see \cite[Theorem 1.5]{ForstnericLarusson2019CAG}).
Much earlier, Y.\ Eliashberg and M.\ Gromov established the basic h-principle 
for holomorphic immersions of Stein manifolds of any dimension $n$ to Euclidean spaces $\C^N$ 
with $N>n$ (see \cite{EliashbergGromov1971}, \cite[Sect. 2.1.5]{Gromov1986}, 
and the survey in \cite[Sect.\ 9.6]{Forstneric2017E}). 
A parametric h-principle was obtained in this context by D.\ Kolari\v c \cite{Kolaric2011}; 
however, since it does not pertain to pairs of parameter spaces, his result does not suffice to infer 
the weak homotopy equivalence, and not even bijectivity between path components of genuine 
and formal immersions. Recall that a one dimensional 
Stein manifold is the same thing as an open Riemann surface 
(see H.\ Behnke and K.\ Stein \cite{BehnkeStein1949}).

A major open problem is whether the h-principle holds 
for immersions $M^n\to\C^n$ from Stein manifolds of dimension $n>1$.
A formal immersion is given by a trivialisation of the tangent bundle $TM$,
but it is not known whether  triviality of $TM$ implies the existence of a 
holomorphic immersion $M\to \C^n$  (see \cite[Problem 9.13.3]{Forstneric2017E}).
However, there is a Stein structure $J'$ on $M$, homotopic to the original Stein 
structure $J$ through a path of Stein structures, such that $(M,J')$ admits a 
holomorphic immersion into $\C^n$ (see F.\ Forstneri\v c and M.\ Slapar
\cite{ForstnericSlapar2007MZ} and K.\ Cieliebak and Y.\ Eliashberg 
\cite[Theorem 8.43 and Remark 8.44]{CieliebakEliashberg2012}.)
The basic h-principle for holomorphic submersions $M\to\C^q$ from Stein manifolds
with $\dim M >q\ge 1$ was proved by the author in \cite{Forstneric2003AM}. 
Parametric h-principle also holds for directed holomorphic immersions of open Riemann surfaces into $\C^n$,  
provided the directional subvariety $A\subset \C^n$ is a complex cone
and $A\setminus \{0\}$ is an Oka manifold (see F.\ Forstneri\v c and F.\ L\'arusson 
\cite{ForstnericLarusson2019CAG} and note that immersions into $\C^n$ for any $n\ge 1$ are a special case). 
The basic case was obtained by A.\ Alarc\'on and F.\ Forstneri\v c in \cite{AlarconForstneric2014IM}. 

The author is not aware of other results in the literature concerning the 
validity of the h-principle for holomorphic immersions from Stein manifolds to complex 
manifolds. The h-principle typically fails for maps from non-Stein manifolds, in particular,
from compact complex manifolds. It also fails in general for immersions into non-Oka manifolds,
for example, into Kobayashi hyperbolic manifolds, due to holomorphic rigidity obstructions.

In the smooth world, the h-principle for immersions $M\to N$ between a pair of smooth 
or real analytic manifolds holds whenever $\dim M<\dim N$, 
or $\dim M=\dim N$ and $M$ is an open manifold
(see S.\ Smale \cite{Smale1959}, M.\ Hirsch \cite{Hirsch1959}, and M.\ Gromov \cite{Gromov1986}). 
However, methods used in the smooth case do not suffice to treat the holomorphic case,
and often there are genuine obstructions coming from holomorphic rigidity properties
of complex manifolds.

%
%
\section{Topological preliminaries}\label{sec:preliminaries}

Recall that $E=T\CP^1\setminus\{0\}\stackrel{\pi}{\longrightarrow}\CP^1$ denotes the tangent bundle of 
$\CP^1$ with the zero section removed. Note that $E|_\C=\C\times \C^*$. The line bundle
$T\CP^1 \to\CP^1$ has degree (Euler number) $2$. Indeed, the coordinate vector field $\frac{\di}{\di z}$
has no zeros on $\C$, while in the coordinate $w=1/z$ centred at $\infty=\CP^1\setminus \C$ 
it equals $-w^2\frac{\di}{\di w}$, so it has a second order zero at $\infty$.

%
%
\begin{lemma}\label{lem:pi1E}
The fundamental group of $E$ equals $\pi_1(E)\cong\mathbb Z_2$. Furthermore, 
the homomorphism 
\[
	\pi_1(E|_\C)=\pi_1(\C\times \C^*) = \Z \,\longmapsto\, \Z_2 = \pi_1(E),
\]
induced by the inclusion $E|_\C\hra E$, is $\Z\ni m\mapsto (m\!\!\mod 2) \in \Z_2$.
\end{lemma}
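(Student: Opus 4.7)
The plan is to use the long exact sequence of homotopy groups for the fibre bundle
\[
	\C^* \longhookrightarrow E \stackrel{\pi}{\lra} \CP^1,
\]
whose fibre $E_x = T_x\CP^1\setminus\{0\}$ is a copy of $\C^*$, combined with the fact that $T\CP^1\to\CP^1$ has Euler number $2$, as noted just before the lemma.

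First I would write down the relevant portion of the long exact sequence:
\[
	\pi_2(\C^*) \lra \pi_2(E) \lra \pi_2(\CP^1) \stackrel{\di}{\lra} \pi_1(\C^*) \lra \pi_1(E) \lra \pi_1(\CP^1).
\]
Since $\pi_2(\C^*)=0$, $\pi_1(\CP^1)=0$, $\pi_2(\CP^1)=\Z$, and $\pi_1(\C^*)=\Z$, this reduces to
\[
	0 \lra \pi_2(E) \lra \Z \stackrel{\di}{\lra} \Z \lra \pi_1(E) \lra 0,
\]
so $\pi_1(E)$ is the cokernel of the connecting homomorphism $\di$. The standard identification of $\di$ for a complex line bundle (minus zero section) with its Euler class then gives that $\di$ is multiplication by the Euler number of $T\CP^1$, which is $2$. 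Consequently $\pi_1(E)\cong\Z/2\Z$, and the generator is the image of a generator of $\pi_1(\C^*)=\pi_1(E_x)$, i.e.\ a simple loop in a fibre.

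The identification of $\di$ with multiplication by the Euler number is the main point, and I would justify it in the concrete way best suited to this setting: the generator of $\pi_2(\CP^1)$ is represented by the identity map $S^2 \to \CP^1$, its lift to $E$ obstructs to a section of the pullback bundle over $S^2$, and the obstruction (= Euler number $=2$) is the class $\di[\Id]\in\pi_1(\C^*)=\Z$. Alternatively, one can compute on the standard atlas: over $\C$ the bundle $T\CP^1$ is trivialised by $\di/\di z$, while near $\infty$ in the coordinate $w=1/z$ one has $\di/\di z=-w^2\di/\di w$, so the clutching cocycle over the equator $\T$ is $z\mapsto -z^{-2}$, a map of degree $-2$; this is exactly $\di$ on generators (up to sign), yielding cokernel $\Z_2$.

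For the second assertion, the inclusion $E|_\C\hra E$ factors as
\[
	E|_\C \simeq \C\times\C^* \hra E,
\]
and a generator of $\pi_1(E|_\C)=\pi_1(\C\times\C^*)=\Z$ is, up to homotopy, a small loop around $0$ in a single fibre $\C^*\subset E|_\C$; this is precisely the image of a generator of $\pi_1(\C^*)$ under the fibre inclusion, which by the computation above maps onto the generator of $\pi_1(E)=\Z_2$. Hence the composite $\Z\to\Z_2$ is the reduction $m\mapsto m\bmod 2$, as claimed.
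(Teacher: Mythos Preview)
Your proof is correct and follows essentially the same approach as the paper: both use the long exact sequence of the fibration $\C^*\hookrightarrow E\to\CP^1$ and identify the connecting map $\di:\pi_2(\CP^1)\to\pi_1(\C^*)$ as multiplication by $2$, yielding $\pi_1(E)\cong\Z_2$ generated by a fibre loop. The only cosmetic differences are that the paper computes $\di$ by an explicit lift (collapsing $bD$ to a point $p$ and lifting via a vector field with a double zero at $p$), whereas you invoke the Euler class/clutching description; and for the second assertion the paper sets up the commutative ladder of long exact sequences for the pair $E|_\C\hookrightarrow E$ to deduce that $\gamma:\Z\to\Z_2$ is surjective, while you argue directly that a generator of $\pi_1(E|_\C)$ is a fibre loop and hence maps to the generator of $\pi_1(E)$.
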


\begin{proof}
We have the comutative diagram
\[ 
	\xymatrix@C+=1.5cm{
	E|_\C \, \ar@{^{(}->}[r] \ar[d]_{\pi}    & E \ar[d]^{\pi}  \\
	{\,\,\,\C\,\,\,}    \ar@{^{(}->}[r]                  & \CP^1
	} 
\]
The exact sequence of homotopy groups associated to these fibrations is
\[ 
	\xymatrix@C+=1cm{
	\cdots \ar[r] & \pi_2(\C)\ar[d]\ar[r] & \pi_1(\C^*)\ar[d]\ar[r]^{\alpha}  & \pi_1(E|_\C) \ar[d]^{\gamma} 
	\ar[r] & \pi_1(\C)\ar[d] \ar[r] & \cdots  \\
	\cdots  \ar[r] & \pi_2(\CP^1) \ar[r]^{\delta} & \pi_1(\C^*) \ar[r]^{\beta} & \pi_1(E) \ar[r] & \pi_1(\CP^1)\ar[r] & \cdots, 
	}
\]
the vertical maps being induced by the horizontal inclusions in the above diagram.
In the top line we have that $\pi_2(\C)=0=\pi_1(\C)$, and $\alpha$ is an isomorphism $\Z\to \Z$.
In the bottom line we have 
\[ 
	\cdots \lra \pi_2(\CP^1)=\Z \stackrel{\delta}{\lra} \pi_1(\C^*)=\Z \stackrel{\beta}{\lra} \pi_1(E) \lra \pi_1(\CP^1)=0 \lra \cdots,
\]
so $\pi_1(E)$ is the cokernel of the boundary map $\pi_2(\CP^1)=\Z \stackrel{\delta}{\to} \Z=\pi_1(\C^*)$.  
Take a generator of $\pi_2(\CP^1)$ in the form of a continuous map from the closed disc $D$ onto $\CP^1$, 
collapsing the boundary of $D$ to a point $p$ in $\CP^1$.  Lift this map to $E$ using a nowhere vanishing 
holomorphic vector field $V$ on $\CP^1\setminus\{p\}$ with a double zero at $p$.  
(For $p=\infty$ we may take $V=\frac{\di}{\di z}$ as seen above.)
The boundary of $D$ then lifts to a loop in $E_p$ that 
winds twice around zero.  Thus, the map $\delta:\Z \to \Z$ equals $m\mapsto 2m$, 
so $\pi_1(E)\cong\mathbb Z_2$ and $\beta: \Z\to\Z_2$ is the map $m\mapsto m\!\!\mod 2$.
The diagram also implies that $\gamma: \Z\to \Z_2$ is surjective, so it equals $m\mapsto m\!\!\mod 2$.
\end{proof}

%
%
\begin{corollary}\label{cor:mapstoE}
For every open Riemann surface, $M$, the space of continuous maps $M\to E$ has 
$2^k$ path components where $H_1(M,\Z)=\Z^k$, $k\in\{0,1,2,\ldots,\infty\}$.  
The map $\Cscr(M, S^1) \hra  \Cscr(M,E)$, induced by the inclusion 
of the circle $ S^1$ into a fibre $E_z\cong \C^*$, determines a surjective map
\[
	\Z^k = H^1(M,\Z)=\pi_0(\Cscr(M, S^1)) \lra \pi_0(\Cscr(M,E)) = H^1(M,\Z_2) =\Z_2^k
\] 
given on every generator by $\Z\ni m\mapsto (m\!\!\mod 2)\in\Z_2$.
\end{corollary}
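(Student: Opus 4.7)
The approach is purely homotopy-theoretic, reducing everything to a computation involving only $\pi_1$. The key topological input is that every open Riemann surface $M$ with $H_1(M,\Z)=\Z^k$ has the homotopy type of a one-dimensional CW complex, namely a bouquet $X=\bigvee_{i=1}^k S^1$ of $k$ circles. (This is a standard consequence, for instance, of the existence of a strictly subharmonic exhaustion on $M$, or of the fact that an open Riemann surface is Stein and homotopy equivalent to its Morse skeleton of index $\le 1$.) Thus $\pi_0(\Cscr(M,Y))=[M,Y]=[X,Y]$ for any path-connected target $Y$.

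Next I would invoke the classical description of unbased homotopy classes out of a wedge of circles: for any path-connected $Y$, the set $[X,Y]$ is in bijection with the set of orbits of the diagonal conjugation action of $\pi_1(Y)$ on $\pi_1(Y)^{\{1,\ldots,k\}}$. When $\pi_1(Y)$ is abelian this action is trivial, and one obtains $[X,Y]=\pi_1(Y)^k$. By Lemma \ref{lem:pi1E} we have $\pi_1(E)=\Z_2$, which is abelian, so
\[
	\pi_0(\Cscr(M,E))=[M,E]=\Z_2^k.
\]
Applying the same principle with $Y=S^1$ gives $\pi_0(\Cscr(M,S^1))=\Z^k$. Under the universal coefficients identifications $H^1(X,A)=\Hom(H_1(X,\Z),A)=A^k$ valid for any abelian group $A$ and for $X$ of the homotopy type of a bouquet of $k$ circles, these two sets become $H^1(M,\Z)$ and $H^1(M,\Z_2)$ respectively, confirming the first claim on the number of components.

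For the second statement, naturality of the identification $[X,Y]=\pi_1(Y)^k$ in the variable $Y$ reduces the induced map $\pi_0(\Cscr(M,S^1))\to \pi_0(\Cscr(M,E))$ to the $k$-fold product of the homomorphism on $\pi_1$ induced by the inclusion $S^1\hra \C^*\cong E_z\hra E$. The inclusion $S^1\hra\C^*$ induces an isomorphism on $\pi_1$, and by Lemma \ref{lem:pi1E} the composition with $\C^*\hra E$ induces precisely $\Z\to\Z_2$, $m\mapsto m\!\!\mod 2$. Thus the global map $\Z^k\to\Z_2^k$ is componentwise reduction modulo $2$, which is obviously surjective.

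I do not expect a genuine obstacle here: the statement is essentially a formal consequence of Lemma \ref{lem:pi1E} combined with the homotopy type of open Riemann surfaces. The only points requiring a little care are the free-vs.-based homotopy distinction (handled by the fact that $\pi_1(E)$ is abelian, so conjugacy classes coincide with elements) and the case $k=\infty$, where the symbol $\Z^k$ must be read as an infinite direct product $\prod_{i=1}^\infty \Z$ and similarly for $\Z_2^k$; the argument above goes through verbatim in that case.
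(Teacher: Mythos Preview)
Your proof is correct and follows essentially the same approach as the paper: both deduce the result from Lemma \ref{lem:pi1E} together with the fact that $M$ has the homotopy type of a bouquet of $k$ circles, identifying $\pi_0(\Cscr(M,Y))=[M,Y]$ with $\pi_1(Y)^k$ for $Y=S^1$ and $Y=E$. You simply supply more detail than the paper's two-line proof, in particular the free-versus-based homotopy point and the reading of $\Z^k$ as a direct product when $k=\infty$.
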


\begin{proof}
This follows immediately from Lemma \ref{lem:pi1E} and the fact that $M$ has the homotopy type of a 
bouquet of $k$ circles, where $H_1(M,\Z)=\Z^k$. Note that  
$\pi_0(\Cscr(M, S^1))=[M,S^1] = H^1(M,\Z)=\Z^k$, and similarly for $\pi_0(\Cscr(M,E))$. 
\end{proof}

%
%

\section{A parametric approximation theorem for immersions \\ from discs to $\CP^1$}\label{sec:approximation}

In this section we prove a homotopy approximation theorem for holomorphic immersions from 
a pair of discs in $\C$ into $\CP^1$; see Proposition \ref{prop:approximation}. 
This is one of the main ingredients in the proof of Theorem \ref{th:WHE}. 

Let $Q\subset P$ be compact Hausdorff spaces which will be used as parameter spaces.
(To establish weak homotopy equivalence in Theorem \ref{th:WHE}, it suffices to consider two special cases: 
$P= S^k$ (the $k$-dimensional sphere) for any $k\in \N$ and $Q=\varnothing$, and $P=\B^k$  
(the closed ball in $\R^k$) for any $k\in\Z_+$ and $Q=bP= S^{k-1}$.) The following conventions will 
be used in the sequel. 
\begin{enumerate} 
\item A holomorphic map on a compact set $K$ in a complex manifold $M$
is one that is holomorphic on an unspecified open neighbourhood of $K$.
\item A homotopy of maps is holomorphic on $K$ if all maps in the family are 
holomorphic on the same open neighbourhood of $K$ in $M$.
\item A holomorphic map $f$ is said to enjoy a certain property on $K$ if it enjoys
that property on a neighbourhood of $K$.
\item When performing standard procedures such (uniform) approximation 
of a family of holomorphic maps on a compact set $K$, their domain 
is allowed to shrink around $K$.
\end{enumerate}

\smallskip
%
%
\begin{proposition}\label{prop:approximation}
Let $Q\subset P$ be as above, and let $\Delta_0\subset \Delta_1$ be a pair of compact smoothly 
bounded discs in $\C$ (diffeomorphic images of the closed unit disc). Assume that $f_p:\Delta_0\to\CP^1$
is a family of holomorphic immersions depending continuously on the parameter $p\in P$ such that
for all $p\in Q$ the map $f_p$ extends to a holomorphic immersion 
$f_p:\Delta_1\to \CP^1$. Let $\dist$ denote the spherical distance function on $\CP^1$.
Given $\epsilon>0$ there exists a continuous family of holomorphic immersions 
$\tilde f_p:\Delta_1\to \CP^1$ $(p\in P)$ such that 
\begin{enumerate}[\rm (a)]
\item 
$\dist(f_p(z),\tilde f_p(z))  <\epsilon$ for all $z\in \Delta_0$ and $p\in P$, and  
\item $\tilde f_p=f_p$ for all $p\in Q$.
\end{enumerate}
\end{proposition}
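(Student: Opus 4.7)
The plan is to first approximate $f_p$ on $\Delta_0$ by a continuous family of holomorphic \emph{maps} $F_p:\Delta_1\to\CP^1$ equal to $f_p$ on $Q$, and then to correct this family by a small parameter-dependent perturbation so that each corrected map becomes a genuine immersion on all of $\Delta_1$.

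For the approximation step I would first extend $(f_p)_{p\in P}$ to a continuous family $\hat f_p:\Delta_1\to\CP^1$ of continuous maps with $\hat f_p|_{\Delta_0}=f_p$ for every $p\in P$ and $\hat f_p=f_p$ on $\Delta_1$ for $p\in Q$; this is possible by the homotopy extension property since $\CP^1$ is an ANR. I would then apply the parametric Mergelyan theorem for maps from open Riemann surfaces into complex manifolds (the auxiliary result announced in the abstract of this paper) with target $\CP^1$ to obtain a continuous family $F_p:\Delta_1\to\CP^1$ of holomorphic maps uniformly close to $f_p$ on $\Delta_0$ and equal to $f_p$ on $\Delta_1$ for $p\in Q$. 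Cauchy estimates upgrade uniform closeness on $\Delta_0$ to $C^1$-closeness on a slightly smaller disc $\Delta_0'\Subset\Delta_0$, so each $F_p$ is automatically an immersion on $\Delta_0'$.

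The main obstacle is the next step: to perturb $F_p$ on $\Delta_1\setminus\Delta_0'$ so as to eliminate the finitely many critical points of $F_p$ there, without damaging conditions (a) and (b). I would carry this out by constructing a continuous family of holomorphic \emph{dominating sprays} $H_p:\Delta_1\times\B^N\to\CP^1$ with $H_p(z,0)=F_p(z)$, with the property that for every $z\in\Delta_1$ the differential $t\mapsto\partial_zH_p(z,t)|_{t=0}$ surjects onto $T_{F_p(z)}\CP^1$. A concrete construction uses the complete holomorphic vector fields $\partial_z,\,z\partial_z,\,z^2\partial_z$ on $\CP^1$ (the $\mathfrak{sl}_2$-triple, which spans $T\CP^1$ at every point) composed via their Möbius flows with scalar factors of the form $\chi(p)z^k$, where $\chi$ is a Urysohn function on $P$ vanishing on $Q$ and large exponents $k$ guarantee that the resulting perturbations of $F_p$ are uniformly small on $\Delta_0$.

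Surjectivity of the derivative-of-spray map implies that the bad set $\Sigma_p=\{(z,t)\in\Delta_1\times\B^N:\partial_zH_p(z,t)=0\}$ is a complex analytic subset of complex codimension $1$ in $\Delta_1\times\B^N$, hence real codimension $2$. A parametric general-position argument — equivalently, obstruction theory on the cells of $P$ for $P=\B^k$ or $P=S^k$, which are the cases that suffice for Theorem \ref{th:WHE} — then produces, for $N$ chosen large relative to $\dim P$, a continuous section $p\mapsto t_p\in\B^N$ with $t_p=0$ for $p\in Q$ and such that $z\mapsto H_p(z,t_p)$ has no critical points on $\Delta_1$. Setting $\tilde f_p(z):=H_p(z,t_p)$ and keeping $|t_p|$ uniformly small yields the required family. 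The subtlety in this final step is ensuring that $t_p$ can be chosen continuously on all of $P$ while simultaneously avoiding the codimension-$2$ bad set and satisfying the boundary condition on $Q$; this is where enlarging the spray dimension $N$ provides the needed room.
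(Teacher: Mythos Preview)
Your spray/transversality step does not work: holomorphic immersions between \emph{equidimensional} complex manifolds are not generic, so a general-position argument cannot produce them. Concretely, fix $p$ and suppose your Mergelyan approximant $F_p=H_p(\cdot,0)$ has a branch point or a multiple pole at some $z_0\in\Delta_1\setminus\Delta'_0$; nothing in your construction prevents this, since $F_p$ is uncontrolled outside $\Delta_0$. Your codimension count is correct---$\Sigma_p$ has complex codimension $1$ in $\Delta_1\times\B^N$---but that is precisely the problem: the projection $\Sigma_p\to\B^N$ has discrete fibres (the critical set of $H_p(\cdot,t)$ for each $t$), hence its image is full-dimensional in $\B^N$. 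Equivalently, the number of critical points of $H_p(\cdot,t)$ in $\mathring\Delta_1$, counted with multiplicity, is locally constant in $t$; if it is positive at $t=0$ it remains positive for all small $t$, so no $t_p$ near $0$ makes $H_p(\cdot,t_p)$ an immersion on $\Delta_1$, regardless of how large $N$ is. Enlarging the spray dimension does not create room, because the obstruction is codimension~$1$, not codimension $>\dim P$.

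The paper's proof avoids this by never letting the derivative vanish in the first place. It writes $f'_p=h_p/\Theta_p$ with $\Theta_p(z)=\prod_j(z-a_j(p))^2$ encoding the (simple) poles and $h_p$ nowhere-vanishing holomorphic on $\Delta_0$; it then approximates the logarithmic derivative $\eta_p=h'_p/h_p$ by $\tilde\eta_p\in\Oscr(\Delta_1)$ subject to finitely many linear interpolation constraints at the points of $A(p)\cap\Delta_1$ (these force the residues of $\tilde h_p/\Theta_p$ to vanish, so a primitive exists on the simply connected $\Delta_1$), and integrates back. The exponential $\tilde h_p=h_p(z_0)\exp\tilde\xi_p$ is automatically nonvanishing, so $\tilde f_p$ is an immersion on all of $\Delta_1$ by construction rather than by genericity.
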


\begin{proof}
A holomorphic immersion $U\to \CP^1$ from an open set $U\subset \C$ is effected 
by a meromorphic function $f$ on $U$ with only simple poles such that $f'(z)\ne 0$
for any point $z\in U$ which is not a pole of $f$. At a pole $a\in U$ of $f$ we have
\begin{equation}\label{eq:Laurent-f}
	f(z)=\frac{c_{-1}}{z-a} + c_0 + c_1(z-a)+\cdots,\quad 
	f'(z)= \frac{-c_{-1}}{(z-a)^2} + c_1+\cdots.
\end{equation}
Thus, $f'$ has a second order pole at $a$ and its residue equals zero.
Conversely, a meromorphic function on a simply connected domain $U$ which has no zeros,
and whose poles (if any) are precisely of the second order with vanishing residues, is the 
derivative of a holomorphic immersion $U\to\CP^1$.

Consider first the special case when the functions $f_p$ have no poles on their domains. 
Pick a point $z_0\in\Delta_0$. Since the derivatives $f'_p$ are novanishing holomorphic functions,
there is a continuous family of holomorphic logarithms 
\[
	\xi_p=\log (f'_p/f'_p(z_0))\in \Oscr(\Delta_0),\quad p\in P
\] 
(and $\xi_p\in \Oscr(\Delta_1)$ if $p\in Q$) such that $\xi_p(z_0)=0$ for all $p\in P$. 
By the parametric Oka-Weil theorem \cite[Theorem 2.8.4]{Forstneric2017E}
we can approximate this family uniformly on $(P\times \Delta_0)\cup (Q\times \Delta_1)$ 
by a continuous family of holomorphic functions $\{\tilde \xi_p\in \Oscr(\Delta_1)\}_{p\in P}$ 
such that $\tilde \xi_p(z_0)=0$ for all $p\in P$ and $\tilde\xi_p=\xi_p$ for all $p\in Q$. 
The family of holomorphic functions given by
\[
	\tilde f_p(z) = f_p(z_0) + f'_p(z_0) \int_{z_0}^z e^{\tilde \xi_p(\zeta)} \, d\zeta,
	\quad z\in \Delta_1,\ p\in P,
\]
then clearly satisfies the conclusion of the proposition.

The proof is more involved in the presence of poles. We shall need the following lemma.

%
%
\begin{lemma}\label{lem:A}
(Assumptions as in Proposition \ref{prop:approximation}.)
Write $P_0=P$. There are an integer $k\in \N$, a neighbourhood $P_1\subset P_0$ of $Q$, 
and for every $p\in P$ a family of $k$ not necessarily distinct points 
$A(p)=\{a_1(p),\ldots, a_k(p)\}$ in $\C$, depending continuously on $p\in P$ and satisfying 
the following conditions.
\begin{enumerate}[\rm (a)]
\item For every $p\in P$, the points in $A(p)\cap \Delta_1$ are pairwise distinct.
\item For every $p\in P_j$ $(j\in \{0,1\})$, $A(p)\cap \Delta_j$ is the set of poles of $f_p$ in $\Delta_j$.
\end{enumerate}
\end{lemma}

More precisely, we consider $A$ as a map $A:P\to\mathrm{Sym}^k(\C)$ into the $k$th symmetric 
power of $\C$, and its continuity is understood in this sense. A point $a_i(p)\in A(p)$ such that 
$a_i(p)=a_j(p)$ for some $i\ne j$ is called a {\em multiple point} of $A(p)$, and the remaining points 
are called {\em simple points}.

\begin{proof}
By the parametric Oka principle for maps into the complex homogeneous manifold
$\CP^1$ (see \cite[Theorem 5.4.4 and Proposition 5.6.1]{Forstneric2017E}) 
we can approximate the family of immersions $\{f_p\}_{p\in P}$ 
uniformly on a neighbourhood of $(P\times \Delta_0)\cup (Q\times \Delta_1)$ in $P\times \C$ 
by a continuous family of rational functions $\{\tilde f_p\}_{p\in P}$. 
Replacing $\tilde f_p$ by $\tilde f_p(z)+cz^N$ for some small $c>0$ and big $N\in \N$, 
we may ensure that for each $p\in P$ the function $\tilde f_p$ has a pole of order $N$ at 
$\infty=\CP^1\setminus \C$. 
For each $p\in P$ we denote by $B(p)=\{b_1(p),\ldots,b_k(p)\}$ the family of poles 
of $\tilde f_p$ lying in $\C$ (which is all except the one at $\infty$), where each point
is listed with multiplicity equal to the order of the pole. Since $\infty$ is an isolated pole of 
each $\tilde f_p$, there is a disc in $\C$ containing $B(p)$ for all $p\in P$. 
Assuming as we may that the approximation of $f_p$ by $\tilde f_p$ is close enough for each $p$,
there are open neighbourhoods $P_1\subset P_0=P$ of $Q$, and $\Delta'_j\subset \C$ of $\Delta_j$ for $j=0,1$, 
such that $B(p)$ has only simple points in $\Delta'_j$ for all $p\in P_j$ $(j\in \{0,1\})$.
This means that $\tilde f_p$, considered as a map into $\CP^1$, is an immersion 
on $\Delta'_j$ for all $p\in P_j$, $j\in \{0,1\}$. The remaining poles of $\tilde f_p$ may be of higher order.

Assuming that the approximation of the immersion $f_p$ by $\tilde f_p$ is close enough for each $p\in P$
on the respective domain,  there is a continuous family of injective holomorphic maps $\phi_p$ $(p\in P=P_0)$, 
defined and close to the identity map on a neighbourhood of $\Delta_j$ if $p\in P_j$ $(j\in \{0,1\})$, such that 
\begin{equation}\label{eq:transition}
	f_p=\tilde f_p\circ \phi_p,\quad p\in P.
\end{equation}
This holds by the parametric version of \cite[Lemma 9.12.6]{Forstneric2017E} 
or \cite[Lemma 5.1]{Forstneric2003AM}, which is easily seen by the same proof. 
Thus, for $p\in P_j$ $(j\in \{0,1\})$, $\phi_p$ maps the set of poles of $\tilde f_p$ 
near the disc $\Delta_j$ bijectively onto the set of poles of $f_p$ near $\Delta_j$. 
We now extend $\phi_p$ to a continuous family of smooth diffeomorphisms $\phi_p:\CP^1\to \CP^1$ $(p\in P)$ 
which are fixed near $\infty$ such that the families of points $A(p) := \phi_p(B(p)) = \{\phi_p(b_j(p))\}_{j=1}^k$ 
for $p\in P$ satisfy the conclusion of the lemma. This is accomplished by choosing $\phi_p$ for 
$p\in P\setminus Q$ such that it expels all multiple points of $B(p)$ out of 
the big disc $\Delta_1$. 
\end{proof}

We continue with the proof of Proposition \ref{prop:approximation}.
For any $p\in P$ let $A(p)$ be given by Lemma \ref{lem:A}. 
Consider the following family of holomorphic polynomials on $\C$
depending continuously on the parameter $p\in P$:
\[
	\Theta_p(z)=\prod_{j=1}^k (z-a_j(p))^2, \quad z\in\C. 
\]
The function
\begin{equation}\label{eq:hp}
	z\mapsto h_p(z)= f_p'(z) \Theta_p(z),\quad p\in P,
\end{equation}
is then nonvanishing holomorphic on $\Delta_0$ for every $p\in P$, and it is nonvanishing 
holomorphic on $\Delta_1$ if $p$ lies in a small neighbourhood $P_1\subset P$ of $Q$. 

Fix $p\in P$ and a point $a\in A(p)\cap \Delta_0$ (resp.\ $a\in A(p)\cap \Delta_1$ if $p\in P_1$). 
Let 
\[
	g_{p,a}(z):=\Theta_p(z)/(z-a)^2 = \prod_{b\in A(p)\setminus \{a\}} (z-b)^2,\quad z\in\C.
\]
A calculation shows that for any holomorphic function $h(z)$ near $z=a$, 
\[
	\Res_{z=a} \frac{h(z)}{\Theta_p(z)} = 
	\lim_{z\to a} \left(\frac{h(z)}{g_{p,a}(z)} \right)' =  \frac{g_{p,a}(a)h'(a)-g'_{p,a}(a)h(a)}{g_{p,a}(a)^2}.
\]
The function $h/\Theta_p$ admits a meromorphic primitive at $a$ if and only if
this residue vanishes, which is equivalent to the condition
\begin{equation}\label{eq:hprimeh}
	\frac{h'(a)}{h(a)} = \frac{g'_{p,a}(a)}{g_{p,a}(a)}=:c_{p,a}.
\end{equation}
This holds for $h_p/\Theta_p=f'_p$ whose primitive is $f_p$.
Thus, when approximating the function $h_p$ \eqref{eq:hp} on $\Delta_0$ 
by a function $\tilde h_p\in \Oscr(\Delta_1)$, 
we must ensure that 
\begin{equation}\label{eq:tildehprime}
	\frac{\tilde h'_p(a)}{\tilde h_p(a)} = c_{p,a}, \quad a\in A(p) \cap \Delta_1.
\end{equation}
We now explain how to do this.
Fix a point $z_0\in \Delta_0$. The family of logarithms 
\begin{equation}\label{eq:xip}
	\xi_p(z) = \log(h_p(z)/h_p(z_0)),\ \ \xi_p(z_0)=0,
\end{equation}
is well defined and holomorphic on $\Delta_j$ for $p\in P_j$ $(j=0,1)$. Note that
\begin{equation}\label{eq:etap}
	\eta_p:=\xi'_p = \frac{h'_p}{h_p}, 
\end{equation}
so the conditions \eqref{eq:hprimeh} for the functions $h=h_p$ are equivalent to 
\begin{equation}\label{eq:etapinterpolation}
	\eta_p(a) = c_{p,a} \ \ \text{for all}\ a\in A(p)\cap\Delta_j,\ p\in P_j,\ j=0,1.
\end{equation}
(Recall that $P_0=P$.) 
To complete the proof, we must find a continuous family of holomorphic functions 
$\tilde \eta_p\in\Oscr(\Delta_1)$, $p\in P$, such that 
\begin{enumerate}[\rm (i)]
\item $\tilde \eta_p$ approximates $\eta_p$ uniformly on $\Delta_0$ for all $p\in P$,
\item $\tilde \eta_p=\eta_p$ for all $p\in Q$, and
\item $\tilde \eta_p(a) = c_{p,a}$ for all $a\in A(p)\cap\Delta_1$ and $p\in P$.
\end{enumerate}
Indeed, having such function $\tilde \eta_p$, we retrace our path back by setting for all 
$z$ in a neighbourhood of $\Delta_1$ and all $p\in P$:
\[
	\tilde \xi_p(z) = \xi_p(z_0) + \int_{z_0}^z \tilde \eta_p(\zeta)\, d\zeta, \qquad
	\tilde h_p(z) = h_p(z_0) \exp{\tilde \xi_p(z)},
\]
\[
	\tilde f_p(z)= f_p(z_0) + \int_{z_0}^z \frac{\tilde h_p(\zeta)}{\Theta_p(\zeta)} \, d\zeta
\]
(see \eqref{eq:hp}, \eqref{eq:xip}, and \eqref{eq:etap}). The integrals for  $\tilde f_p$ are well defined
and independent of the choice of a path in the disc $\Delta_1$ since, 
by the construction, the function $\tilde h_p/\Theta_p$ has vanishing residue 
at every point in $A(p)\cap\Delta_1$.

It remains to construct a family of functions $\tilde \eta_p$ satisfying conditions (i)--(iii) above. 
This is a linear interpolation problem at finitely many points depending continuously on $p\in P$.
Since a convex combination of solutions is again one, we may use 
partitions of unity on the parameter space $P$. We proceed as follows.
Fix a point $p_0\in  P$. If $p_0$ belongs to the neighbourhood $P_1$ of $Q$, there is nothing 
to do since $h_p$ is already holomorphic on the big disc $\Delta_1$ for $p\in P_1$ and we shall 
use this family in the sequel. Assume now that $p_0\in P\setminus P_1$.
For $j=0,1$ we choose open neighbourhoods $D_j\subset \C$ of
$\Delta_j$ such that $\Delta_j\subset D_j\subset \Delta'_j$ and $A(p_0) \cap bD_j=\varnothing$. 
By continuity of the map $p\mapsto A(p)$, there is an open neighbourhood $U=U_{p_0}\subset P\setminus Q$ 
of $p_0$ such that $A(p) \cap bD_j=\varnothing$ for $j=0,1$ and $p\in U$.
It follows that for $j=0,1$ the number of points in the set $A(p) \cap D_j$ is 
independent of $p\in U$. Let 
\[
	A(p) \cap D_1 = \{a_1(p),\ldots,a_m(p)\}, \quad\ p\in U,
\]
where the points $a_j(p)$ are distinct and depend continuously on $p\in U$.
Consider the polynomials
\[
	\phi_{p,i}(z) = 
	\frac{\prod_{j\ne i} (z-a_j)}{\prod_{j\ne i} (a_i-a_j)}, \quad\ p\in U,\ i=1,\ldots,m.
\]
Then, $\phi_{p,i}(a_j)=\delta_{i,j}$. For $p\in U$, any function $\eta_p$ satisfying condition 
\eqref{eq:etapinterpolation} is of the form
\[
	\eta_p(z) = \sum_{i=1}^m c_{p,a_i} \phi_{p,i}(z) + \sigma_p(z) \prod_{i=1}^m (z-a_i)
\]
for some $\sigma_p\in \Oscr(\Delta_0)$. Approximating $\sigma_p$ uniformly on $\Delta_0$ by 
a function $\tilde \sigma_p\in\Oscr(\Delta_1)$ depending continuously on $p\in U$,
we get functions $\tilde \eta_p\in \Oscr(\Delta_1)$ given by 
\[
	\tilde \eta_p(z) = \sum_{i=1}^m c_{p,a_i} \phi_{p,i}(z) + \tilde \sigma_p(z) \prod_{i=1}^m (z-a_i)
\]
depending continuously on $p\in U$ and satisfying conditions (i) and (iii). 
On the other hand, condition (ii) is vacuous since $U\cap Q=\varnothing$. 

To complete the proof, we cover the compact set $P\setminus P_1$ by finitely many open sets
$U_1,\ldots, U_l\subset P\setminus Q$ of this type, add the set $U_0=P_1$ into the collection, 
choose a partition of unity $\{\chi_i\}_{i=0}^l$ on $P$ subordinate to the open cover $\{U_0,\ldots, U_l\}$
of $P$, and use it to combine the resulting families of solutions $\tilde \eta_{p,i}$ for $p\in U_i$
$(i=0,\ldots,l)$ into a global family of solutions 
\[
	\tilde \eta_p = \sum_{i=0}^l \chi_i(p) \tilde \eta_{p,i}, \quad p\in P.
\]
Since none of sets $U_1,\ldots, U_l$ intersects $Q$, the resulting family $\tilde \eta_p$ also satisfies 
condition (ii) for $p\in Q$. This completes the proof.
\end{proof}

%
%

\section{Parametric Mergelyan theorem for manifold valued maps}\label{sec:Mergelyan}

The main result of this section, Theorem \ref{th:Mergelyan-parametric}, provides a 
parametric version of Mergelyan's approximation theorem for maps from certain 
compact sets in Riemann surfaces to arbitrary complex manifolds.
Although this is a relatively straightforward extension of the nonparametric case 
(see \cite[Theorem 1.4]{Forstneric2019MMJ} and \cite[Theorem 16]{FornaessForstnericWold2018}), 
we could not find it in the literature, so we take this opportunity to prove it.
Our proof also applies to families of maps from certain compact subsets in higher dimensional
complex manifolds; see Remark \ref{rem:Mergelyanhigherdim}.

Given complex manifolds $M$ and $X$ and a compact subset $S$ of $M$,
we denote by $\Ascr(S,X)$ the space of continuous maps $S\to X$ which are
holomorphic on the interior of $S$. We write $\Ascr(S,\C)=\Ascr(S)$.

%
%
\begin{definition}\label{def:LMP}
A compact set $S$ in a Riemann surface has the {\em Mergelyan property}
(or the {\em Vitushkin property}) if every function in $\Ascr(S)$ can be approximated 
uniformly on $S$ by functions holomorphic on neighbourhoods of $S$.
\end{definition}

Denote by $\Oscrc(S)$ the uniform closure in $\Cscr(S)$ of the set $\{f|_S: f\in\Oscr(S)\}$,
so $S$ has the Mergelyan property if and ony if $\Ascr(S)=\Oscrc(S)$. If $S$ is a plane compact then by 
Runge's theorem \cite{Runge1885} the set $\Oscrc(S)$ equals the rational algebra $\Rscr(S)$, i.e., 
the uniform closure in $\Cscr(S)$ of the space of rational functions on $\C$ with poles off $S$. 
A characterization of this class of plane compacts in terms of the continuous analytic capacity 
was given by A.\ G.\ Vitushkin in 1966 \cite{Vitushkin1966,Vitushkin1967}. 
See also the exposition in T.\ W.\ Gamelin's book \cite{Gamelin1984}.

We shall also consider compact sets of the following special kind.

%
%
\begin{definition}[Admissible sets in Riemann surfaces] \label{def:admissible}
A compact set $S$ in a Riemann surface $M$ is {\em admissible} if  
it is of the form $S=K\cup \Lambda$, where $K$ is the  union of finitely many pairwise disjoint compact domains 
in $M$ with piecewise $\Cscr^1$ boundaries and $\Lambda= \overline{S \setminus K}$ is the union 
of finitely many pairwise disjoint smooth Jordan arcs and closed Jordan curves meeting $K$ only in their endpoints (or not at all) and such that their intersections with the boundary $bK$ of $K$ are transverse.
\end{definition} 

It was shown in \cite[Theorem 1.4]{Forstneric2019MMJ} that if a compact set $S$
in a Riemann surface has the Mergelyan property for functions, then it also has the Mergelyan property
for maps into an arbitrary complex manifold $X$. Furthermore, if $S$ is admissible 
then the Mergelyan approximation theorem in the $\Cscr^r(S,X)$ topology 
holds for maps in $\Ascr^r(S,X)=\Ascr(S,X)\cap \Cscr^r(S,X)$ 
(see \cite[Theorem 16]{FornaessForstnericWold2018}). 
We now prove the following parametric version of this result.

%
%
\begin{theorem} \label{th:Mergelyan-parametric}
If $M$ is a Riemann surface and $S$ is a compact set in $M$ with the Mergelyan property,
then $S$ has the parametric Mergelyan property for maps to an arbitrary complex manifold $X$.

More precisely, given a family of maps $f_p\in \Ascr(S,X)$ depending continuously on a parameter
$p$ in a compact Hausdorff space $P$, a Riemannian distance function $\dist$ on $X$, and
a number $\epsilon>0$, there are a neighbourhood $U\subset M$ of $S$
and a family of holomorphic maps $\tilde f_p:U\to X$, depending continuously on $p\in P$,
such that $\dist(\tilde f_p(x),f_p(x)) <\epsilon$ holds for all $x\in S$ and $p\in P$.

If $S=K\cup \Lambda$ is an admissible set in $M$ and $f_p\in\Ascr^r(S,X)$ for some $r\in\N$ with a continuous
dependence on $p\in P$, then the family $f_p$ can be approximated in the $\Cscr^r(S,X)$
topology by a family of holomorphic maps $\tilde f_p\in \Oscr(S,X)$ in an open neighbourhood of $S$, 
depending continuously on $p\in P$.

If in addition there is a compact subset $Q$ of $P$ such that  $f_p\in \Oscr(S)$ 
for all $p\in Q$, then the family $\tilde f_p$ can be chosen such that
$\tilde f_p=f_p$ for all $p\in Q$.

If $M$ is an open Riemann surface, the set $S$ has no holes in$M$, $X$ is an Oka manifold, 
and $f_p\in \Oscr(M,X)$ for all $p\in Q$, then the approximating family of maps $\tilde f_p$ 
$(p\in P)$ in these results can be chosen holomorphic on all of $M$.
\end{theorem}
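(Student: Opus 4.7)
The plan is to follow the architecture of the nonparametric Mergelyan theorem of \cite[Thm.\ 1.4]{Forstneric2019MMJ} and \cite[Thm.\ 16]{FornaessForstnericWold2018}, wrapping it in a partition-of-unity argument on the parameter space $P$. The principal extra difficulty beyond the scalar parametric Mergelyan theorem (i.e., the parametric Oka--Weil theorem \cite[Thm.\ 2.8.4]{Forstneric2017E} for $\C^N$-valued maps) is that the target $X$ is a general complex manifold, so one cannot form linear combinations of local approximations; this is handled by localising in holomorphic charts of $X$ and patching via Cartan-type splitting.

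First, for each $p_0 \in P\setminus Q$ I would apply the nonparametric Mergelyan theorem to $f_{p_0}$ to obtain $g^{(p_0)}\in\Oscr(V,X)$ on a neighbourhood $V$ of $S$ that is $(\epsilon/2)$-close to $f_{p_0}$ in the chosen topology ($\Cscr^0$, or $\Cscr^r$ when $S$ is admissible). By continuity of $p\mapsto f_p$ and compactness of $S$, this constant-in-$p$ map stays $\epsilon$-close to $f_p$ on $S$ for $p$ in a neighbourhood $U_{p_0}\subset P\setminus Q$ of $p_0$. Separately, the given continuous family $\{f_p\}_{p\in Q}\subset \Oscr(V,X)$ would be extended by a Tietze-type construction, carried out locally in charts on $X$ and patched by a partition of unity on a neighbourhood $U_Q$ of $Q$ in $P$, to a continuous family of holomorphic maps $g^{Q}_p\in\Oscr(V,X)$ with $g^{Q}_p=f_p$ for $p\in Q$. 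Cover $P$ by $U_Q$ together with finitely many $U_{p_1},\ldots,U_{p_l}$, and take a subordinate partition of unity $\{\chi_j\}_{j=0}^{l}$ with $\chi_0$ supported in $U_Q$.

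The second step is to combine these local families into a single continuous family $\tilde f_p\in \Oscr(V,X)$. Cover the compact combined image in $X$ by finitely many holomorphic charts $\phi_\mu\colon W_\mu\to\C^n$ and refine $S$ into finitely many compact pieces $S_\mu$ (each inheriting the Mergelyan, or admissibility, property from $S$) such that for every $p\in P$ all local approximations contributing at $p$ carry $S_\mu$ into a single $W_\mu$. In that chart the convex combination
\[
    \tilde f_p^{\,\mu} := \phi_\mu^{-1}\Bigl(\chi_0(p)\,\phi_\mu\!\circ\! g^{Q}_p + \sum_{j=1}^{l}\chi_j(p)\,\phi_\mu\!\circ\! g^{(p_j)}\Bigr)
\]
is well defined, holomorphic in $x$, continuous in $p$, $\epsilon$-close to $f_p$, and equal to $f_p$ for $p\in Q$ (since then only $j=0$ contributes and $g^{Q}_p=f_p$). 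The chart-by-chart pieces $\tilde f_p^{\,\mu}$ are then reconciled into a global $\tilde f_p$ by the Cartan splitting lemma for manifold-valued maps, exactly as in the nonparametric Mergelyan proof; the splitting depends continuously on all data, hence on $p$. The $\Cscr^r$-assertion on admissible sets follows verbatim by feeding in the $\Cscr^r$-version of the nonparametric theorem.

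For the final assertion, assume $X$ is Oka, $S$ has no holes in $M$, and $f_p\in\Oscr(M,X)$ for $p\in Q$. The family $\tilde f_p\in\Oscr(V,X)$ just produced satisfies $\tilde f_p=f_p$ for $p\in Q$; since $S$ has no holes in $M$, it is $\Oscr(M)$-convex in the Stein manifold $M$. The parametric Oka principle with approximation on $S$ and interpolation on $Q$ \cite[Thm.\ 5.4.4]{Forstneric2017E} then furnishes a continuous family $\hat f_p\in\Oscr(M,X)$ that agrees with $f_p$ for $p\in Q$ and is uniformly close to $\tilde f_p$ (hence to $f_p$) on $S$, proving the last clause. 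The hardest step is the parametric Cartan splitting that reconciles different chart-valued approximations while maintaining continuous dependence on $p$; this bookkeeping is standard in Oka theory (\cite[Chap.\ 5]{Forstneric2017E}) but is technically the heaviest ingredient.
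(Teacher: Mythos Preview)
Your overall architecture—reduce to the nonparametric theorem, combine via a partition of unity on $P$, and invoke the parametric Oka principle for the last clause—matches the paper's. However, the step where you reconcile the chart-local pieces $\tilde f_p^{\,\mu}$ on overlapping subsets $S_\mu\subset S$ via a ``Cartan splitting lemma for manifold-valued maps, exactly as in the nonparametric Mergelyan proof'' is a genuine gap. The nonparametric proof in \cite{Forstneric2019MMJ} does \emph{not} work chart-by-chart on $S$ and then glue; it uses Poletsky's theorem to obtain a \emph{single} Stein neighbourhood $V\subset M\times X$ of the entire graph $G_f$, embeds $V\hookrightarrow\C^N$, approximates the $\C^N$-valued map, and retracts back via a holomorphic retraction $\rho:\Omega\to\phi(V)$. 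No holomorphic gluing on pieces of $S$ occurs. Your proposed gluing is across the \emph{holomorphic} variable $x$, so a partition of unity on $S$ is unavailable; writing $\tilde f_p^{\,\mu}=\tilde f_p^{\,\mu'}\circ\gamma$ for a biholomorphism $\gamma$ close to the identity is only possible when the maps are immersions or submersions, not for arbitrary holomorphic maps into $X$. The same difficulty infects your Tietze-type extension near $Q$, which also relies on chart-local convex combinations that must then be reconciled across $S$.

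The paper's route sidesteps this entirely by keeping all patching in the parameter direction. One covers $P$ (not $S$) by finitely many open sets $P_j$ such that the graphs $\{G_{f_p}:p\in\overline{P_j}\}$ all lie in a common Stein domain $V_j\subset M\times X$; this comes from Poletsky's theorem applied at a single $p_j$ together with continuity in $p$. Embedding $V_j\hookrightarrow\C^N$ and retracting reduces the problem over each $P_j$ to the parametric Mergelyan theorem for $\C^N$-valued functions, which is the straightforward partition-of-unity-on-$P$ argument. The resulting families $\{g_{p,j}\}_{p\in P_j}$ are then combined by successive patching: on $P_i\cap P_j$ both families have graphs inside one $V_j$, so one takes a $\chi(p)$-weighted convex combination \emph{in $\C^N$} via the embedding and retracts—interpolation in the continuous variable $p$, which preserves holomorphicity in $x$. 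The condition $\tilde f_p=f_p$ for $p\in Q$ is handled in the scalar case by Michael's selection theorem in the Banach space $\Ascr(K)$ for a compact neighbourhood $K\supset S$, and is inherited by the general case through the Stein embedding and retraction.
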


The last statement is a direct consequence of the previous ones and the parametric
Oka principle for maps from Stein manifolds (in particular, from open Riemann surfaces)
to Oka manifolds; see \cite[Theorem 5.4.4]{Forstneric2017E}.

%
%
\begin{proof} 
Recall that a compact set $S$ in a complex manifold $M$ is a {\em Stein compact} if $S$
admits a basis of open Stein neighbourhoods. Every proper compact subset of a connected 
Riemann surface is obviously a Stein compact.

For the sake of motivation, we first recall the proof of \cite[Theorem 1.4]{Forstneric2019MMJ} in the nonparametric case. Assume that $S$ has the Mergelyan property, i.e., $\Ascr(S)=\Oscrc(S)$.
Let $f\in\Ascr(S,X)$. Pick a point $s_0\in S$ and choose a closed disc $D\subset M$
around $s_0$. By the theorem of Boivin and Jiang \cite[Theorem 1]{BoivinJiang2004}, 
the assumption $\Ascr(S)=\Oscrc(S)$ implies $\Ascr(S\cap D)=\Oscrc(S\cap D)$. 
By choosing $D$ small enough, $f(S\cap D)$ lies in a coordinate chart of $X$, 
and hence (by the Mergelyan property for functions) the map $f$ can be approximated uniformly 
on $S \cap D$ by maps into $X$ that are holomorphic on neighbourhoods of $S\cap D$. 
Thus, $f$ can be approximated locally on $S$ by holomorphic maps. It follows from a theorem of 
E.\ Poletsky \cite{Poletsky2013} (see also \cite[Theorem 32]{FornaessForstnericWold2018})
that its graph $G_f=\{(s,f(s)):s\in S\}$ is a Stein compact in $M\times X$. 
From this, we easily infer that $S$ enjoys the Mergelyan property for maps $S\to X$ 
(see \cite[Lemma 3]{FornaessForstnericWold2018}); here is an outline of proof.

Let $V\subset M\times X$ be a Stein neighbourhood of the graph $G_f$.  
By the Remmert-Bishop-Narasimhan theorem (see \cite[Theorem 2.4.1]{Forstneric2017E})
there is a proper holomorphic embedding $\phi : V\hra\C^N$  into a complex Euclidean space.
By the Docquier-Grauert theorem (see \cite[Theorem 3.3.3]{Forstneric2017E})
there is a neighbourhood $\Omega\subset \C^N$ of $\phi(V)$ and a holomorphic retraction 
$\rho:\Omega \to \phi(V)$. Assuming that $\Oscrc(S)  = \Ascr(S)$, we can approximate the map 
$\phi\circ f : S \to \C^N$ as closely as desired uniformly on $S$
by a holomorphic map $G: U\to \Omega\subset\C^N$ from an open neighbourhood
$U\subset M$ of $S$. The holomorphic map 
\[
	g = pr_X \circ \phi^{-1} \circ \rho \circ G : U \to X
\]
then approximates $f$ uniformly on $S$.

We now consider the parametric case. When $X=\C$, the proof is a
simple application of the nonparametric case, using  a continuous partition of unity
on $P$. Indeed, there is a finite set $\{p_1,\ldots,p_k\}\subset P$ and for each $j=1,\ldots, k$ 
an open set $P_j\subset P$, with $p_j\in P_j$, such that 
\begin{equation}\label{eq:grid}
	\|f_p-f_{p_j}\|_S:=\max_{s\in S}|f_p(s)-f_{p_j}(s)|<\frac{\epsilon}{4} 
	\ \  \text{for every $p\in P_j$, $j=1,\ldots,k$}.
\end{equation}
Since $S$ has the Mergelyan property, there are functions $g_j\in\Oscr(S)$ such that 
\begin{equation}\label{eq:approxgrid} 
	\|g_j-f_{p_j}\|_S<\frac{\epsilon}{4}  \ \ \text{for $j=1,\ldots, k$}.
\end{equation}
Let $\{\chi_j\}_{j=1}^k$ be a partition of unity on $P$ subordinate to the cover $\{P_j\}_{j=1}^k$.
Set 
\[
	\tilde f_p=\sum_{j=1}^k \chi_j(p) g_j\in \Oscr(S), \quad p\in P. 
\]
For every $p\in P$ we then have $\tilde f_p-f_p=\sum_{j=1}^k \chi_j(p) (g_j-f_p)$.
If $p\in P_j$ then 
\[
	\|g_j-f_p\|_S \le \|g_j-f_{p_j}\|_S+\|f_{p_j}-f_p\|_S <\frac{\epsilon}{2} 
\]
by \eqref{eq:grid} and \eqref{eq:approxgrid}. If on the other hand $p\notin P_j$ then $\chi_j(p)=0$, 
so this term does not appear in the above sum for $\tilde f_p$. It follows that
\[
	\|\tilde f_p-f_p\|_S \le  \sum_{j=1}^k \chi_j(p) \|g_j-f_p\|_S < \frac{\epsilon}{2} \ \ \text{for every $p\in P$}.
\]
Finally, to satisfy the last condition in the theorem (i.e., fixing the maps $f_p\in\Oscr(S)$ for the parameter
values $p\in Q$), we proceed as follows. Choose a compact neighbourhood $K \subset M$
of $S$ such that  $f_p\in\Ascr(K)$ for all $p\in Q$. As $\Ascr(K)$ is a Banach space, 
Michael's extension theorem \cite{Michael1956I} (see also \cite[Theorem 2.8.2]{Forstneric2017E})
yields a continuous extension of the family $\{f_p\in\Ascr(K)\}_{p\in Q}$ to a
continuous family $\{\xi_p\in\Ascr(K)\}_{p\in P}$ such that $\xi_p=f_p$ for $p\in Q$.
Let $\{\tilde f_p\}_{p\in P}$ be the family constructed above. Choose a small neighbourhood
$P_0\subset P$ of $Q$ such that $\|\xi_p-f_p\|_S<\epsilon/2$ for all $p\in P_0$.
Pick a continuous function $\chi:P\to [0,1]$ supported in $P_0$ such that $\chi =1$ on $Q$
and replace $\tilde f_p$ by $\chi(p)\xi_p+(1-\chi(p) \tilde f_p$. This family enjoys all required conditions.

Consider now the general case of maps to a complex manifold $X$.
Given a continuous family $\{f_p\}_{p\in P} \in \Ascr(S,X)$, 
the proof of the basic case and the compactness of $P$ yield 
an open cover $\{P_j\}_{j=1}^k$ of $P$ and Stein domains  
$V_j$ in $M\times X$ for $j=1,\ldots,k$ such that
\[
	\overline{\bigcup_{p\in P_j} G_{f_{p}}} \subset V_j,\qquad j=1,\ldots,k.
\]
Embedding $V_j$ into a Euclidean space $\C^N$, the proof of the special case and the parametric 
approximation theorem for  functions (hence for maps to $\C^N$) 
allow us to approximate each family $\{f_p\}_{p\in P_j}$ as closely as desired 
uniformly on $S$ by a continuous family $\{g_{p,j}\}_{p\in P_j}\in \Oscr(U,X)$, where $U\subset M$
is an open neighbourhood of $S$. Furthermore, we can ensure that $g_{p,j}=f_p$ for $p\in P_j\cap Q$.
Assuming that the approximations are close enough and shrinking 
$U$ around $S$ if necessary, we can patch the families $\{g_{p,j}\}_{p\in P_j}$ into a single family 
$\{\tilde f_p\}_{p\in P}\in \Oscr(U)$ satisfying the conclusion of the theorem 
by applying the {\em method of successive patching}; see \cite[p.\ 78]{Forstneric2017E}. 
This means that we patch a pair of families at a time, using the embedding $V_j\hra \C^N$ 
of the Stein domain containing maps from both families on the set of patching.

If $S$ is an admissible set then the same proof applies to continuous families of maps in 
$\Ascr^r(S,X)$ for any $r\in\N$. We use \cite[Theorem 16]{FornaessForstnericWold2018}
to approximate single maps, and the rest of the procedure is exactly as above.
\end{proof}

%
%
\begin{remark}\label{rem:Mergelyanhigherdim}
Theorem \ref{th:Mergelyan-parametric} and its proof generalise to 
the case when $M$ is a manifold of higher dimension and $S$ is a 
{\em strongly admissible set} in $M$ in the sense of \cite[Definition 5]{FornaessForstnericWold2018}.
This means that $S$ is a Stein compact of the form $S=K\cup \Lambda$, where 
$K=\overline D$ is the closure of a strongly pseudoconvex Stein domain and 
$\Lambda=\overline{S\setminus K}$ is a totally real submanifold of $M$.
A discussion of this subject can be found in \cite[Sect.\ 7.2]{FornaessForstnericWold2018};
see in particular \cite[Corollary 9]{FornaessForstnericWold2018} which gives the basic
(nonparametric) case of the Mergelyan approximation theorem in this situation. 
A slightly less precise result in this direction (with some loss of derivatives), 
but applying to a more general geometric situation
concerning sections of holomorphic submersion onto Stein manifolds, 
is \cite[Theorem 3.8.1]{Forstneric2017E}. Its parametric generalisations are 
used in the cited book with ad hoc proofs, similar to the one given in the proof of
Theorem \ref{th:Mergelyan-parametric} above.
\end{remark}

%
%

\section{Proof of Theorem \ref{th:WHE}}\label{sec:proof}

Recall that $E$ stands for the tangent bundle of $\CP^1$ with the zero section removed, and
continuous maps $M\to E$ from an open Riemann surface $M$ are called {\em formal immersions}
from $M$ to $\CP^1$. Let $V$ be a nowhere vanishing holomorphic vector field on $M$.
Such $V$ serves to trivialise the tangent bundle $TM$; the precise choice will not be important.
Every genuine holomorphic immersion $f:M\to \CP^1$ determines the formal immersion
$\Phi(f)=df(V):M\to E$ (see \eqref{eq:lifting}). Now, the weak homotopy equivalence asserted 
in Theorem \ref{th:WHE} follows from the following parametric h-principle which basically
says that a continuous family of formal immersions $M\to E$ can be deformed to a continuous
family of genuine holomorphic immersions $M\to\CP^1$, and the homotopy may be kept fixed
on a compact subset of the parameter space where the given family already consists
of genuine immersions.

%
%
\begin{theorem}[The parametric h-principle for immersions $M\to\CP^1$]\label{th:PHP}
Let $M$ be an open Riemann surface, $V$ be a nowhere vanishing holomorphic vector field on $M$,
and $Q\subset P$ be compact Hausdorff spaces. Assume that $f_p:M\to\CP^1$, $p\in Q$, is a 
continuous family of holomorphic immersions and $\sigma_p:M\to E$, $p\in P$, is a 
continuous family of maps (formal immersions) such that for all $p\in Q$ we have 
$\sigma_p=\Phi(f_p):=df_p(V)$.
Then, the family $\{f_p\}_{p\in Q}$ extends to a continuous family of holomorphic immersions
$f_p:M\to\CP^1$, $p\in P$, such that there is a homotopy $\sigma^t_{p}:M\to E$ $(p\in P,\ t\in [0,1])$
which is fixed for $p\in Q$ and satisfies $\sigma^0_p=\sigma_p$ and $\sigma^1_p=\Phi(f_p)$
for all $p\in P$.
\end{theorem}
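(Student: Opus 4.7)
The plan is to follow the standard inductive scheme for holomorphic h-principles on open Riemann surfaces: first make the formal immersion $\sigma_p$ itself holomorphic using the Oka property of $E$, then exhaust $M$ by Runge admissible compacts and upgrade $\sigma_p$ to a genuine holomorphic immersion one noncritical bump or one handle at a time, tracking a formal immersion homotopy throughout. Since $E$ is an Oka manifold (as noted right after Theorem \ref{th:WHE}), the parametric Oka principle for maps into $E$ (\cite[Theorem 5.4.4]{Forstneric2017E}) lets us deform $\{\sigma_p\}_{p\in P}$ through formal immersions rel $Q$ to a continuous family of holomorphic maps $\sigma_p \in \Oscr(M,E)$; from now on we assume $\sigma_p$ is holomorphic on all of $M$. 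Write $M = \bigcup_{n\ge 1} K_n$ as a countable union of compact admissible Runge sets with $K_n \subset \mathrm{Int}\,K_{n+1}$, chosen via a strongly subharmonic exhaustion of $M$ so that each inclusion $K_n \hookrightarrow K_{n+1}$ is either (a) a \emph{noncritical thickening}, for which $K_{n+1}$ deformation retracts to $K_n$, or (b) a \emph{critical step}, for which $K_{n+1} = K_n \cup \gamma$ with $\gamma$ a smooth embedded Jordan arc meeting $bK_n$ transversely at its two endpoints.

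The inductive construction produces continuous families of holomorphic immersions $f^n_p : U_n \to \CP^1$ on open neighborhoods $U_n$ of $K_n$, with $f^n_p = f_p$ for $p \in Q$, together with formal immersion homotopies from $\sigma_p|_{U_n}$ to $\Phi(f^n_p)$ rel $Q$, arranged so that $f^{n+1}_p$ approximates $f^n_p$ uniformly on $K_n$ with convergence of first derivatives; the diagonal limit will then be the required immersion $f_p : M \to \CP^1$. For the base case, take $K_1$ to be a small disc mapped into a coordinate chart of $\CP^1$, in which chart $f^1_p$ arises by integrating the $\C$-valued holomorphic $1$-form obtained by pairing $\sigma_p$ with the dual of $V$; then $\Phi(f^1_p)$ is connected to $\sigma_p|_{U_1}$ by a straight-line homotopy of nowhere vanishing holomorphic sections. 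The noncritical step (a) is handled by applying Proposition \ref{prop:approximation} to a pair of discs covering the collar $K_{n+1} \setminus \mathrm{Int}\,K_n$; the topology is unchanged, so no adjustment of the formal immersion homotopy is required. For the critical step (b), first extend $f^n_p$ smoothly across $\gamma$ to a continuous family $g_p \in \Ascr^1(K_n \cup \gamma, \CP^1)$ with $g_p|_{K_n} = f^n_p$, with $g_p$ a smooth immersion along $\gamma$, and with $\Phi(g_p)$ homotopic to $\sigma_p$ on $K_n \cup \gamma$ rel $Q$; then apply the parametric Mergelyan theorem (Theorem \ref{th:Mergelyan-parametric}) with $r = 1$ and target $\CP^1$ to approximate $g_p$ uniformly in the $\Cscr^1$-topology by a continuous family of holomorphic maps $f^{n+1}_p$ on a neighborhood of $K_n \cup \gamma$, kept equal to $f_p$ for $p \in Q$. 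The $\Cscr^1$-closeness together with the immersion property of $g_p$ forces $f^{n+1}_p$ to be an immersion on a neighborhood of $K_n \cup \gamma$, completing the inductive step.

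The principal obstacle will be constructing $g_p$ along $\gamma$ in step (b) so that $\Phi(g_p)|_\gamma$ lies in the correct homotopy class relative to the endpoint values and relative to $\sigma_p|_\gamma$. Any smooth immersion extension of $f^n_p$ across $\gamma$ produces a section of $E$ over $\gamma$ whose relative homotopy class --- measured by a winding number in the fiber $E_x \cong \C^*$ --- may fail to match that of $\sigma_p|_\gamma$. The remedy is a period-matching adjustment: thread the derivative of $g_p$ along the interior of $\gamma$ with the appropriate number of extra fiberwise windings, continuously in $p \in P$ and trivially for $p \in Q$, where $g_p = f_p$ already satisfies $\Phi(g_p) = \sigma_p$. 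Once this bookkeeping is in place, the concatenation (with suitable reparametrization in $t$) of the formal immersion homotopies produced at each inductive stage yields the homotopy $\sigma_p^t$ from $\sigma_p^0 = \sigma_p$ to $\sigma_p^1 = \Phi(f_p)$ demanded by Theorem \ref{th:PHP}.
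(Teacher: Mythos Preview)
Your overall architecture matches the paper's: exhaust $M$ by Runge admissible compacts coming from a Morse exhaustion, handle noncritical collars via Proposition~\ref{prop:approximation}, handle critical arcs by a smooth extension plus the parametric Mergelyan theorem (Theorem~\ref{th:Mergelyan-parametric}), and keep track of the formal immersion homotopy along the way. The period--matching discussion in your critical step is also correct in spirit; the paper phrases this as an appeal to the Smale--Hirsch--Gromov parametric h-principle for smooth immersions of an arc into $\CP^1$ with prescribed boundary jets.

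There is, however, a genuine gap in your noncritical step. You write that it is ``handled by applying Proposition~\ref{prop:approximation} to a pair of discs covering the collar,'' but Proposition~\ref{prop:approximation} by itself does not produce an immersion on $K_{n+1}$. It only gives you a family $\tilde f_p$ of immersions on the big disc $\Delta_1$ which \emph{approximates} $f^n_p$ on the small disc $\Delta_0\subset K_n$; the two families do not agree on $\Delta_0$, so they do not amalgamate into a map on $K_n\cup\Delta_1$. Since after the first critical step $K_n$ is no longer simply connected, you cannot take $\Delta_1$ to contain $K_{n+1}$ either. The paper resolves this with a nontrivial gluing argument: one writes $f^n_p=\tilde f_p\circ\gamma_p$ on a neighbourhood of $\Delta_0$ for a continuous family of injective holomorphic maps $\gamma_p$ close to the identity (the parametric version of \cite[Lemma~9.12.6]{Forstneric2017E}), then invokes the splitting lemma for Cartan pairs \cite[Theorem~9.7.1]{Forstneric2017E} to factor $\gamma_p=\beta_p\circ\alpha_p^{-1}$ with $\alpha_p$ defined near $K_n$ and $\beta_p$ near $\Delta_1$, so that $f^n_p\circ\alpha_p$ and $\tilde f_p\circ\beta_p$ agree on the overlap and define a single immersion $f^{n+1}_p$ on a neighbourhood of $K_n\cup\Delta_1$. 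This Cartan--pair gluing is an essential ingredient and must be inserted into your noncritical step; without it the induction does not proceed past the first noncritical collar.
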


Indeed, Theorem \ref{th:WHE} follows from Theorem \ref{th:PHP} applied with the pairs
of parameter spaces $P= S^k$ (the $k$-dimensional sphere) and $Q=\varnothing$, 
and $P=\B^k$  (the closed ball in $\R^k$) and $Q=bP= S^{k-1}$
(cf.\ \cite[proof of Corollary 5.5.6]{Forstneric2017E}).

\begin{proof}
The main ingredients have already been established: the parametric approximation theorem 
for holomorphic immersions from a pair of discs into $\CP^1$ (see Proposition \ref{prop:approximation}), 
the parametric Mergelyan approximation theorem on admissible sets 
(see Theorem \ref{th:Mergelyan-parametric}), and the parametric h-principle for smooth immersions
due to Smale \cite{Smale1959} and Hirsch \cite{Hirsch1959} (see also 
Gromov \cite{Gromov1973,Gromov1986}). The proof of the theorem amounts to an induction 
in which these ingredients are combined. Although this construction is rather 
standard and is similar to those given in \cite[proof of Theorem 5.4.4]{Forstneric2017E} 
and in \cite{AlarconForstneric2014IM,ForstnericLarusson2019CAG}, among many others. 
we include the details by the request of the referee and for the benefit of the readers who 
may not be familiar with the h-principle.
 
We exhaust $M$ by an increasing sequence 
\[	
	\varnothing = D_0\subset D_1\subset D_2\subset \cdots \subset \bigcup_{j=1}^\infty D_j =M
\]
of compact, smoothly bounded, not necessarily connected domains without holes 
(i.e., such that $M\setminus D_j$ has no relatively compact connected components for any $j$),
where $D_1$ is a disc and every pair $D_j\subset \mathring D_{j+1}$ is of one of the following two types:
\begin{enumerate}[\rm (i)]
\item {\em The noncritical case:} 
$D_{j+1}$ is the disjoint union $D'_{j+1}\cup D''_{j+1}$ of smoothly bounded
domains such that $\overline{D'_{j+1}\setminus D_j}$ is diffeomorphic to $bD_j\times [0,1]$
(so it is the union of finitely many pairwise disjoint compact annuli) 
and $D''_{j+1}$ is either a disc or the empty set.
\smallskip
\item {\em The critical case:}  $D_{j+1}$ admits a deformation retraction onto an admissible set 
$S=D_j\cup \Lambda\subset \mathring D_{j+1}$ (see Definition \ref{def:admissible}), 
where $\Lambda$ is a smooth arc in $\mathring D_{j+1}\setminus D_j$ attached with its endpoints to $bD_j$. 
(This handle attachment occurs in only one connected component of $D_{j+1}$, while the other 
components are noncritical extensions of the corresponding components of $D_j$ as in case (i).)
\end{enumerate}
The critical case (ii) has the following three topologically distinct subcases.
\begin{enumerate}
\item[\rm (ii$_1$)] The endpoints of the arc $\Lambda$ are attached to the same connected component 
of $bD_j$. In this case, the topological genus satisfies $g(D_{j+1})=g(D_j)$ and $bD_{j+1}$ has one 
more connected component than $bD_j$.
\smallskip 
\item[\rm (ii$_2$)] The endpoints of $\Lambda$ are attached to different connected components of the 
boundary of the same connected component of $D_j$. In this case, $g(D_{j+1})=g(D_j)+1$
and the number of boundary curves decreases by one. The domain $\overline{D_{j+1}\setminus D_j}$ 
consists of a {\em pair of pants} (i.e., a compact surface with genus one and three boundary components) 
together with finitely many pairwise disjoint compact annuli. 
\smallskip
\item[\rm (ii$_3$)] The endpoints of $\Lambda$ are attached to different connected components of $D_j$. 
In this case, $g(D_{j+1})=g(D_j)$ and the number of boundary curves decreases by one. 
\end{enumerate}
The Euler number decreases by one in cases (ii$_1$) and (ii$_2$), and it increases by one in case (ii$_3$). 

An exhaustion of this type is obtained by taking regular sublevel sets of
a strongly subharmonic Morse exhaustion function $\rho:M\to\R_+$ such that $\rho$ 
has at most one critical point in $\mathring D_{j+1}\setminus D_{j}$ for every $j=0,1,2,\ldots$. 
The case (i) with $D''_{j+1}\ne \varnothing$ is usually included in the critical case since it corresponds 
to passing a local minimum of $\rho$ at which a new connected component of the sublevel set
$\{\rho<c\}$ appears, 
however, the procedure that will be required in this case is similar to the one in the noncritical case.

Given an open subset $U$ of $M$, we denote by 
\[
	\Iscr(P\times U,\CP^1)
\]
the space of continuous maps $f:P\times U\to \CP^1$ such that for every $p\in P$
the map $f_p=f(p,\cdotp):U\to\CP^1$ is a holomorphic immersion.
Let $\dist$ denote the spherical distance function on $\CP^1$. 
Let $f^0=f\in \Iscr(Q\times M,\CP^1)$ be as in the theorem. 
Pick a number $\epsilon>0$ and set $\epsilon_0=\epsilon$.

We shall inductively construct sequences of maps $f^j\in \Iscr(P\times U_j,\CP^1)$, where 
$U_j\subset M$ is a small open neighbourhood of $D_j$, numbers $\epsilon_j>0$, and
homotopies $\sigma^{j,t}:P\times M\to E$ $(t\in [0,1])$ such that 
$\sigma^{1,0}=\sigma:P\times M\to E$ is the map in the statement of the theorem
and the following conditions hold for every $j\in\N$, where conditions (ii) and (v)--(vii) are void for $j=1$.
\begin{enumerate}[\rm (i)]
\item $f^j_p=f_p|_{U_j}$ for all $p\in Q$.
\smallskip
\item $\dist(f^j, f^{j-1})<\epsilon_{j-1}$ on $P\times D_{j-1}$. 
\smallskip
\item $\epsilon_j<\epsilon_{j-1}/2$, and if a map $h: P\times D_{j} \to\CP^1$ satisfies 
$\dist(h,f^j)<2\epsilon_j$ on $P\times D_{j}$ and $h_p=h(p,\cdotp)$ is holomorphic on 
$\mathring D_j$ for every $p\in P$, then $h_p: D_{j-1}  \to\CP^1$ 
is an immersion for every $p\in P$.
\smallskip
\item $\sigma^{j,t}_{p}=\sigma_p$ for all $p\in Q$ and $t\in [0,1]$.
\smallskip
\item $\sigma^{j,0}=\sigma^{j-1,1}$. 
\smallskip
\item $\sigma^{j,0}_p = \Phi(f^{j-1}_p)$ on $D_{j-1}$ 
and $\sigma^{j,1}_p = \Phi(f^{j}_p)$ on $D_{j}$ for all $p\in P$. 
\smallskip
\item $\sigma^{j,t}_p = \Phi(f^{j-1,t}_p)$ on $D_{j-1}$ for all $p\in P$, where the homotopy
$f^{j-1,t}\in\Iscr(P\times D_{j-1},\CP^1)$ $(t\in [0,1])$ 
satisfies $f^{j-1,0}=f^{j-1}$, $f^{j-1,1}=f^j|_{P\times D_{j-1}}$, and 
$\dist(f^{j-1,t},f^{j-1})<\epsilon_{j-1}$ on $P\times D_{j-1}$.  
\end{enumerate}

Assume for a moment that such sequences exist. Conditions (i)--(iii) ensure that the sequence
$f^j$ converges uniformly on compacts in $P\times M$ to a map $f\in \Iscr(P\times M,\CP^1)$
which extends the given map $f\in \Iscr(Q\times M,\CP^1)$ in the theorem.
Define the homotopy $\sigma^t:P\times M\to E$ for $0\le t<1$ by 
\[
	\sigma^t = \sigma^{j,\tau_j(t)}\quad \text{on}\ t\in [1-2^{-j+1},1-2^{-j}],\ \ j\in\N,
\]
where $\tau_j(t)= 2^j(t-1+2^{-j+1})$.
(Note that $\tau_j$ maps the interval $[1-2^{-j+1},1-2^{j}]$ linearly onto $[0,1]$.)
Condition (v) ensures compatibility of the definition at the points $t=1-2^{-j}$ and hence
$\sigma^t$ is continuous in $t\in[0,1)$, while (iv) shows that $\sigma^t=\sigma$ on $Q\times M$ 
for all $t\in[0,1)$. Conditions (vi) and (vii) ensure the
existence of the limit map $\lim_{t\to 1}\sigma^t = \sigma^1:P\times M\to E$
such that $\sigma^1_p=\Phi(f_p)$ on $M$ for all $p\in P$. This completes the proof of the theorem,
granted that we have sequences with the stated properties.
 
Let us now show how one obtains such sequences. It is instructive to look at the initial step
of the induction, in particular since this argument will also be used in some of the subsequent steps.

%
%
\smallskip
\noindent {\em 0) The initial step:} 
the domain $D_1$ is a closed disc, and our goal is to extend
the given family of holomorphic immersions $f_p|_{D_1}:D_1\to \CP^1$, $p\in Q$, to a continuous 
family of holomorphic immersions $f^1_p:D_1\to \CP^1$, $p\in P$, such that the family 
$\Phi(f_p):D_1\to E$, $p\in P$, is homotopic to the given family of formal immersions $\sigma_p$ on $D_1$.
(We adopt the convention from Sect.\ \ref{sec:approximation} concerning the notion of
holomorphic families of maps from compact subsets of $M$.) 

Fix a point $x_1\in \mathring D_1$.
There is a holomorphic coordinate $z:U_1\to \C$ on a neighborhood of $D_1$ in $M$ 
such that $z(D_1)=\cd\subset \C$ is the closed unit disc and $z(x_1)=0$. 
Let $\pi:E\to \CP^1$ denote the base projection. The formal immersions $\sigma_p:M\to E$
determine the map $a(p)=\pi\circ \sigma_p(x_1)\in \CP^1$, $p\in P$,
and we shall choose our discs $f^1_p$ such that $f^1_p(x_1)=a(p)$ for all $p$. The formal
immersions also determine the derivative of $f^1_p$ at $x_1$ as follows.
Write $\CP^1=\C\cup\{\infty\}$ and define the sets 
\[	
	P_0=\{p\in P: a(p)\in \CP^1\setminus \{\infty\}\},
	\quad P_1=\{p\in P: a(p)\in \CP^1\setminus \{0\}\}.
\]
Clearly these sets form an open cover of $P$. Choose a complex 
coordinate $w$ on $\C=\CP^1\setminus \{\infty\}$ and a trivialisation 
$E|_\C\cong\C\times \C^*$ of the $\C^*$-bundle $E\to\CP^1$ over $\C$. 
The fibre component of the point $\sigma_p(x_1)\in E$ for any 
$p\in P_0$ is then a number $v(p) \in \C^*$ depending continuously on $p\in P_0$. 
Let $V$ be the nowhere vanishing holomorphic vector field on $M$ as in the theorem. 
In the coordinate $z$ on $D_1$ we have $V(x_1)=c\frac{\di}{\di z}\big|_{x_1}$ for some $c\ne 0$. 
The embedded holomorphic discs $g_p:D_1\to\CP^1$, $p\in P_0$, given in the pair of coordinates 
$z$ and $w$ by 
\[
	w=a(p)+c^{-1}v(p) z \in \C\subset \CP^1
\]
then satisfy
\[
	d(g_p)_{x_1} (V(x_1)) = (a(p),v(p)) = \sigma_p(x_1),\quad p\in P_0. 
\]
We repeat the construction for $p\in P_1$ with respect to the holomorphic coordinate
$\zeta=1/w$ on $\CP^1\setminus \{0\}$ and the respective trivialisation of $E\to \CP^1\setminus \{0\}$ 
to get another family of embedded discs $h_p:D_1\to\CP^1\setminus \{0\}$ satisfying  
\[
	d(h_p)_{x_1} (V(x_1)) = \sigma_p(x_1),\quad p\in P_1.
\]
Pick a continuous function $\chi:P\to [0,1]$ with support in $P_0$ and consider the family of
holomorphic discs
\[
	f^1_p = \chi(p) g_p + (1-\chi(p))h_p : D_1\to\CP^1, \quad p\in P.
\]
(The convex combination is nontrivial only on the set $\{0<\chi<1\}\subset P_0\cap P_1$,
and for $p$ in this set the centre $g_p(x_1)=h_p(x_1)$ lies in $\C^*=\CP^1\setminus\{0,1\}$.)
Clearly, $d(f^1_p)_{x_1}(V(x_1))=\sigma_p(x_1)$ for all $p\in P$.
Hence, there is a smaller disc $D'_1\subset D_1$ around $x_1$ such that 
$f^1_p:D'_1\to\CP^1$ $(p\in P)$ is a continuous family of embedded holomorphic discs.
It is trivial to find a homotopy of formal immersions $\sigma^{1,t}:P\times M\to E$ from the initial one, 
$\sigma^{1,0}=\sigma$, to $\sigma^{1,1}$ such that $\sigma^{1,1}_p=d(f^1_p)(V)=\Phi(f^1_p)$ holds
on $D'_1$ for every $p\in P$. The problem of extending these immersions and homotopies 
(by approximation) from $D'_1$ to $D_1$ is a part of the next step where
we deal with the noncritical case. 

We now explain the induction step $j\to j+1$ in each of the two cases.

%
%
\smallskip
\noindent {\em 1) The noncritical case.} 
In this case, $\overline{D_{j+1}\setminus D_j}$ is a finite union of 
compact pairwise disjoint annuli and perhaps an additional disc $D''_{j+1}$ not intersecting $D_j$.
We extend the family of immersions $f^j_p:U_j\to \CP^1$ to a small disc in $D''_{j+1}$ just as in the initial 
case explained above, thereby reducing the problem to the case when 
$\overline{D_{j+1}\setminus D_j}$ consists only of annuli. 
Hence, $D_{j+1}$ is obtained from $D_j$ by successively attaching finitely many discs
so that we have a Cartan pair at every step; 
this is a special case of \cite[Lemma 5.10.3]{Forstneric2017E} which pertains to the
more general case of noncritical strongly pseudoconvex cobordisms.
In fact, we can recover a given cylinder by attaching two well chosen discs.

The induction step is obtained by applying Proposition \ref{prop:approximation} 
finitely many times, once for each disc attachment. 
Let us explain the procedure at each step. Thus, we have attached a compact smoothly bounded 
disc $B\subset M$ to a compact smoothly bounded domain $A\subset M$ such that 
$C=A\cap B$ is also a disc, $A\cup B$ is smoothly bounded, and $(A,B)$ is a Cartan pair:
$\overline{A\setminus B}\cap \overline{B\setminus A}=\varnothing$. In a coordinate 
chart $z:U\to U'\subset \C$ on a neighbourhood $U\subset M$ of $B$, the pair $C\subset B$
corresponds to a pair of compact discs $\Delta_0\subset\Delta_1$ in $\C$. 

%
%
By Proposition \ref{prop:approximation}  we can approximate a continuous family of
immersions $f_p:A\to \CP^1$ $(p\in P)$ as closely as desired on a neighbourhood of 
$C$ by a continuous family of immersions $g_p:B\to\CP^1$ $(p\in P)$, keeping fixed 
those for $p\in Q$ which are already defined on $M$. 
%
%
Then, there is a smaller open neighbourhood $U$ of $C$ such that  
\begin{equation}\label{eq:transition}
	f_p=g_p\circ \gamma_p \ \ \text{on\ $U$ for all $p\in P$},
\end{equation}
where $\gamma_p:U\to M$ $(p\in P)$ is a continuous family of injective holomorphic maps
close to the identity, with $\gamma_p$ being the identity for $p\in Q$. As has already been mentioned in 
connection to \eqref{eq:transition}, such transition maps $\gamma_p$ are given by the parametric 
version of \cite[Lemma 9.12.6]{Forstneric2017E} or \cite[Lemma 5.1]{Forstneric2003AM}.

By the splitting lemma for biholomorphic maps close to the identity on a Cartan pair
(see \cite[Theorem 4.1]{Forstneric2003AM} or \cite[Theorem 9.7.1]{Forstneric2017E}), we have that 
\[
	\gamma_p =\beta_p \circ \alpha_p^{-1},\qquad p\in P,
\]
where $\alpha_p:A\to M$ and $\beta_p:B\to M$ are injective holomorphic maps close to the identity
on a pair of open neighbourhoods $\wt A\supset A$, $\wt B\supset B$ of the respective domains, 
depending continuously on $p\in P$ and agreeing with the identity map for $p\in Q$. 
It follows that for all $p\in P$, 
\[
	f_p\circ \alpha_p = g_p\circ \beta_p\ \ \text{holds on a neighbourhood of $C$}.
\]
Hence, the two sides amalgamate into a continuous family of holomorphic immersions 
$\tilde f_p:A\cup B\to \CP^1$, $p\in P$, such that $\tilde f_p=f_p$ for $p\in Q$. 

Applying this procedure to $f^j\in \Iscr(U_j,\CP^1)$ furnishes in finitely many steps 
a map $f^{j+1}\in \Iscr(U_{j+1},\CP^1)$, where $U_{j+1}$ is a neighborhood of $D_{j+1}$, 
which approximates $f^j$ to any given precision on a fixed neighbourhood of $D_j$ and
such that $f^{j+1}_p=f^j_p$ holds for all $p\in Q$.

Assuming as we may that the approximations are close enough, there is a 
homotopy of holomorphic immersions $f^{j,t}_p: U_j\to\CP^1$ $(p\in P,\ t\in [0,1])$ on a 
neighbourhood $U_j$ of $D_j$ satisfying condition (vii). This can be seen by writing 
$f^{j+1}_p=f^{j}_p \circ \gamma_p$, where $\gamma_p:U'_j\to M$ $(p\in P)$ is a 
continuous family of injective holomorphic maps which are defined and close to the identity map
on a neighbourhood of $D_j$, and they equal the identity for $p\in Q$
(see \eqref{eq:transition} and the references given there.) 
By \cite[Proposition 3.3.1]{Forstneric2017E} there are a neighbourhood $\Omega\subset TM=M\times \C$
of $D_j\subset M$ with convex fibres in the (trivial) tangent bundle of $M$ and a 
holomorphic map $s:\Omega\to M$ which takes the fibre of $\Omega$ over any point 
$x\in \Omega\cap M$ biholomorphically onto a neighbourhood of $x$ in $M$, with $s(x,0)=x$. 
Assuming as we may that $\gamma_p$ is close enough to the identity, it follows that 
$\gamma_p=s\circ \lambda_p$ where $\lambda_p$ is a holomorphic section of $\Omega$ 
over a neighbourhood of $D_j$. By radially deforming $\lambda_p$ to the zero section
(recall that $\Omega$ has convex fibres), we obtain for each $p\in P$ 
a homotopy $\gamma^t_p$ $(t\in [0,1])$ from $\gamma^1_p=\gamma_p$ to $\gamma^0_p=\Id$.
Hence, 
\[
	f^{j,t}_p:=f^j_p\circ \gamma^t_p, \quad\  p\in P,\ t\in [0,1],
\]
is a homotopy of immersions satisfying condition (vii).  

Since there is no change of topology, it is a trivial matter 
to deform the family of formal immersions accordingly to satisfy conditions (iv)--(vii). 
Condition (iii) holds for any sufficiently small number $\epsilon_{j+1}>0$ 
which we choose at this point. This completes the induction step in the noncritical case.

%
%
\smallskip
\noindent {\em 2) The critical case.} 
It suffices to explain the procedure in the unique connected component of $D_{j+1}$ 
containing a component of $D_j$ (or a pair of components of $D_j$ in subcase (ii$_3$))
as a topologically nontrivial extension. The remaining pairs of components form 
noncritical extensions and hence the method in the previous case applies to them. 

To simplify the presentation, we therefore assume without loss of generality
that $D_{j+1}$ is connected. We begin with a map $f^j\in\Iscr(U_j,\CP^1)$, 
where $U_j$ is an open neighbourhood of $D_j$. 
Choose a compact smoothly bounded domain $D'_j \subset U_j$
containing $D_j$ in its interior and diffeotopic to $D_j$.
Then, $D_{j+1}$ admits a deformation retraction onto an admissible set 
$S=D'_j\cup \Lambda \subset \mathring D_{j+1}$,  where $\Lambda$ is a smooth arc 
in $\mathring D_{j+1}\setminus D'_j$ attached with its boundary points to $bD'_j$.

In the first step, we extend the immersions $f^j_p:D'_j\to\CP^1$, $p\in P$, across the arc $\Lambda$ 
to obtain a continuous family of smooth immersions $f^j_p:S\to\CP^1$ which are holomorphic 
in the interior of $S$, keeping fixed the maps $f^j_p$ for $p\in Q$, 
such that the family of maps $\Phi(f^j_p):S\to E$, $p\in P$, is homotopic on $S$ to the given family 
$\sigma_p:M\to E$ of formal immersions and the homotopy is fixed for $p\in Q$. 
Such extensions exist by the Smale-Hirsch-Gromov parametric 
h-principle for smooth immersions (see \cite{Smale1959,Hirsch1959,Gromov1973,Gromov1986});
in the case at hand we are considering immersions from an arc $\Lambda$
into the Riemann sphere $\CP^1$, with fixed values and derivatives at the endpoints of $\Lambda$.

In the second step, we apply the parametric Mergelyan theorem 
(see Theorem \ref{th:Mergelyan-parametric}) to approximate the new family of immersions 
$S\to \CP^1$ in the $\Cscr^1(S)$ topology by a continuous 
family of holomorphic immersions from a neighbourhood 
$B \subset M$ of $S$ into $\CP^1$. Since $D'_j\cup\Lambda$ is a deformation retract of 
$D_{j+1}$, we can choose $B$ to be a smoothly bounded domain 
such that $D_{j+1}$ is a noncritical extension of $B$, i.e., $\overline{D_{j+1}\setminus B}$ 
is a union of annuli. By applying the noncritical case established above,
we can therefore extend the family of immersions (by approximation on $B$) to  $D_{j+1}$. 

Choose a number $\epsilon_{j+1}$ satisfying condition (iii). The remaining steps, 
finding a homotopy $f^{j+1,t}$ satisfying condition (vii) and adjusting the homotopy of 
formal immersions such that conditions (iv)--(vi) hold, are done as in the noncritical case.
This completes the induction step. 
\end{proof}


\subsection*{Acknowledgements}
My research is supported by the program P1-0291 and the grant 
J1-9104 from ARRS, Republic of Slovenia. A part of the work was done during my visit to
University of Granada in September 2019. I wish to thank this institution, and in particular A.\ Alarc\'on, 
for the kind invitation and partial support. I also thank Finnur L\'arusson for having proposed the problem 
and for a helpful discussion of topological issues in Sect.\ \ref{sec:preliminaries}.



\vspace*{0.6cm}
\noindent Franc Forstneri\v c \\
\noindent Faculty of Mathematics and Physics, University of Ljubljana, Jadranska 19, SI--1000 Ljubljana, Slovenia\\
\noindent 
Institute of Mathematics, Physics and Mechanics, Jadranska 19, SI--1000 Ljubljana, Slovenia.\\
\noindent e-mail: {\tt franc.forstneric@fmf.uni-lj.si}


\begin{thebibliography}{10}

\bibitem{AlarconForstneric2014IM}
A.~Alarc{\'o}n and F.~Forstneri\v{c}.
\newblock Null curves and directed immersions of open {R}iemann surfaces.
\newblock {\em Invent. Math.} {\bf 196}:3 (2014), 733--771

\bibitem{BehnkeStein1949}
H.~Behnke and K.~Stein.
\newblock Entwicklung analytischer {F}unktionen auf {R}iemannschen {F}l\"achen.
\newblock {\em Math. Ann.} {\bf 120} (1949), 430--461. 

\bibitem{BoivinJiang2004}
A.~Boivin and B.~Jiang.
\newblock Uniform approximation by meromorphic functions on {R}iemann surfaces.
\newblock {\em J. Anal. Math.} {\bf 93} (2004), 199--214.

\bibitem{CieliebakEliashberg2012}
K.~Cieliebak and Y.~Eliashberg.
\newblock {\em From {S}tein to {W}einstein and back. Symplectic geometry of
  affine complex manifolds}, Amer. Math. Soc. Colloquium Publ., vol.~59, 
\newblock Amer. Math. Soc., Providence, RI, 2012.

\bibitem{FornaessForstnericWold2018}
J.~E. {Forn{\ae}ss}, F.~{Forstneri{\v c}}, and E.~{Forn{\ae}ss Wold}.
\newblock {"Holomorphic approximation: the legacy of Weierstrass, Runge,
Oka-Weil, and Mergelyan"}.
\newblock{\em Advancements in Complex Analysis. From theory to practice,} 
Springer, Cham 2020, pp.\ 133--192.

\bibitem{Forstneric2003AM}
F.~Forstneri\v{c}.
\newblock Noncritical holomorphic functions on {S}tein manifolds.
\newblock {\em Acta Math.} {\bf 191}:2 (2003) 143--189.

\bibitem{Forstneric2017E}
F.~Forstneri\v{c}.
\newblock {\em Stein manifolds and holomorphic mappings (The homotopy principle
  in complex analysis)}, 2nd. ed., Ergebnisse Math.\ Grenzgeb.\ (3), vol.\ 56,  
\newblock Springer, Cham, 2017.

\bibitem{Forstneric2019MMJ}
F.~Forstneri\v{c}.
\newblock Mergelyan's and {A}rakelian's theorems for manifold-valued maps.
\newblock {\em Mosc. Math. J.} {\bf 19}:3 (2019), 465--484. 

\bibitem{ForstnericLarusson2019CAG}
F.~Forstneri\v{c} and F.~L{\'a}russon.
\newblock {The parametric \(h\)-principle for minimal surfaces in
  \(\mathbb{R}^n\) and null curves in \(\mathbb{C}^n\).}
\newblock {\em Commun. Anal. Geom.} {\bf 27}:1 (2019), 1--45.

\bibitem{ForstnericSlapar2007MZ}
F.~Forstneri\v{c} and M.~Slapar.
\newblock Stein structures and holomorphic mappings.
\newblock {\em Math. Z.} {\bf 256}:3 (2007), 615--646.

\bibitem{Gamelin1984}
T.~W. Gamelin.
\newblock {\em Uniform algebras}, 2nd ed., 
\newblock Chelsea, New York, 1984; Russian transl.\ of 1st ed., Mir, Moscow 1973.

\bibitem{Gromov1986}
M.~Gromov.
\newblock {\em Partial differential relations}, 
Ergebnisse Math.\ Grenzgeb.\ (3), vol.\ 9, 
\newblock Springer-Verlag, Berlin, 1986; Russian transl., Mir, Moscow 1990.

\bibitem{Gromov1973}
M.~L. Gromov.
\newblock Convex integration of differential relations. {I}.
\newblock {\em Izv. Akad. Nauk SSSR Ser. Mat.} {\bf 37}:2 (1973), 329--343; 
English transl., {\em Math. USSR-Izv.} {\bf 7}:2 (1973), 329--343.

\bibitem{EliashbergGromov1971}
M.~L. Gromov and Ya.~M.\ {\'E}liashberg.
\newblock Nonsingular mappings of {S}tein manifolds.
\newblock {\em Funkcional. Anal. i Prilo\v zen.} {\bf 5}:2 (1971), 82--83;
English transl., {\em Funct. Anal. Appl.} {\bf 5}:2 (1971), 156--157.

\bibitem{GunningNarasimhan1967}
R.~C. Gunning and R.~Narasimhan.
\newblock Immersion of open {R}iemann surfaces.
\newblock {\em Math. Ann.} {\bf 174} (1967), 103--108.

\bibitem{Hirsch1959}
M.~W. Hirsch.
\newblock Immersions of manifolds.
\newblock {\em Trans. Amer. Math. Soc.} {\bf 93} (1959), 242--276.

\bibitem{Kolaric2011}
D.~Kolari\v{c}.
\newblock Parametric {$H$}-principle for holomorphic immersions with approximation.
\newblock {\em Differential Geom. Appl.} {\bf 29}:3 (2011) 292--298.

\bibitem{Michael1956I}
E.~Michael.
\newblock Continuous selections. {I}.
\newblock {\em Ann. of Math.} (2) {\bf 63}:2 (1956), 361--382.

\bibitem{Poletsky2013}
E.~A. Poletsky.
\newblock Stein neighborhoods of graphs of holomorphic mappings.
\newblock {\em J. Reine Angew. Math.} {\bf 2013}:684 (2013), 187--198.

\bibitem{Runge1885}
C.~Runge.
\newblock Zur {T}heorie der {E}indeutigen {A}nalytischen {F}unctionen.
\newblock {\em Acta Math.} {\bf 6}:1 (1885), 229--244. 

\bibitem{Smale1959}
S.~Smale.
\newblock The classification of immersions of spheres in {E}uclidean spaces.
\newblock {\em Ann. of Math.} (2) {\bf 69}:2 (1959), 327--344. 

\bibitem{Vitushkin1966}
A.~G. Vitushkin.
\newblock Conditions on a set which are necessary and sufficient in order that
  any continuous function, analytic at its interior points, admit uniform
  approximation by rational fractions.
\newblock {\em Dokl. Akad. Nauk SSSR}, {\bf 171}:6 (1966), 1255--1258;
English transl., {\em Soviet Math. Dokl.} {\bf 7} (1966), 1622--1625.

\bibitem{Vitushkin1967}
A.~G. Vitushkin.
\newblock Analytic capacity of sets in problems of approximation theory.
\newblock {\em Uspehi Mat. Nauk} {\bf 22}:6(138) (1967), 141--199.
English transl., {\em Russian Math. Surveys} {\bf 22}:6 (1967), 139--200.

\end{thebibliography}
\end{document}